\documentclass[a4paper,12pt,reqno]{amsart}
\usepackage{latexsym}
\usepackage{color}
\usepackage{amssymb,amsfonts,amsmath,mathrsfs}
\addtolength{\textwidth}{3 truecm}
\addtolength{\textheight}{1 truecm}
\setlength{\voffset}{-0.6 truecm}
\setlength{\hoffset}{-1.3 truecm}

\newcommand{\ze}[4]{\zeta^{#1}(1+\alpha_{#2}+\beta_{#3}+#4)}

\newtheorem{theorem}{Theorem}[section]

\newtheorem{lemma}{Lemma}[section]

\newtheorem{remark}{Remark}[section]

\begin{document}

\title[Amplified fourth moment of the Riemann zeta-function and applications]{Amplified fourth moment of the Riemann zeta-function and applications}
\author{Hung M. Bui, Richard R. Hall and Martin Subira Jorge}
\subjclass[2010]{11M06, 11M26.}
\keywords{Riemann zeta-function, Hardy's $Z$-function, moments.}
\address{Department of Mathematics, University of Manchester, Manchester M13 9PL, UK}
\email{hung.bui@manchester.ac.uk}
\address{Department of Mathematics, University of York, York YO10 5DD, UK (deceased)}
\address{Girton College, University of Cambridge, Cambridge CB3 0JG, UK}
\email{ms2737@cam.ac.uk, msubirajorge@gmail.com}
\subjclass[2010]{11M06, 11M26, 26D15.}
\keywords{Riemann zeta-function, moments, twisted moments, zero spacing, large gaps, Wirtinger's inequality.}

\begin{abstract}
The twisted fourth moment of the Riemann zeta-function was established by Hughes and Young [J. Reine Angew. Math. 641 (2010), 203--236] and later improved by Bettin, Bui, Li and Radziwi\l\l\  [J. Eur. Math. Soc. (JEMS) 22 (2020), 3953--3980]. In applications one would often like to take the Dirichlet polynomial to mimic either $1/\zeta^r(s)$ (a mollifier) or $\zeta(s)^r$ (an amplifier) for some $r>0$. Previous known results include the mean value of the fourth power of $\zeta(s)$ times the square or the fourth power of a mollifier, or the square of an amplifier. In this paper we obtain the asymptotic formula for the fourth moment of the Riemann zeta-function times the fourth power of an amplifier. This has various
 applications to the theory of the Riemann zeta-function, e.g. gaps between zeros of $\zeta(s)$ and lower bounds for moments.
\end{abstract}

\allowdisplaybreaks

\maketitle

\section{Introduction and statement of results}

The distribution of the Riemann zeta-function on the critical line is a fascinating and challenging topic in number theory. A lot can be said about this by studying the moments of $\zeta(s)$. The $2k$-th moment of the Riemann zeta-function is defined as
\[
I_k(T)=\int_{0}^{T}|\zeta(\tfrac12+it)|^{2k}dt.
\]
Apart from the trivial case $k=0$, and the cases $k=1$, $k=2$ due to Hardy and Littlewood \cite{HL} and Ingham \cite{I}, respectively, no other asymptotic formulae is known for almost a century. Unconditional sharp upper bounds are known only for $0\leq k\leq 2$ \cite{H-B, BCR,HRS}. Note that the celebrated Lindel\"of Hypothesis, which states that $\zeta(1/2+it)\ll_\varepsilon(1+|t|)^\varepsilon$, is equivalent to the sharp bound $I_k(T)\ll_{\varepsilon,k}T^{1+\varepsilon}$ for all $k\in\mathbb{N}$.

To bridge the gap between Ingham's result for $k=2$ and the open problem for $k=3$, one can study the twisted moments of $\zeta(s)$. Let $$A(s)=\sum_{n\leq T^\vartheta}\frac{\alpha_n}{n^s}$$ be a Dirichlet polynomial of length $T^\vartheta$ with $\vartheta>0$ and $\alpha_n\ll n^\varepsilon$. Hughes and Young \cite{HY} established an asymptotic formula for the twisted fourth moment,
\begin{equation}\label{originaltwist}
\int_{0}^{T}|\zeta(\tfrac12+it)|^{4}|A(\tfrac12+it)|^2dt,
\end{equation}
for $\vartheta<1/11$. Their result was later improved to $\vartheta<1/4$ by Bettin, Bui, Li and Radziwi\l\l\ \cite{BBLR}. 

Depending on applications, one often takes $\alpha_n$ to approximate $\mu_r(n)$ or $d_r(n)$, for some $r>0$. The role of $A(s)$ there is to mimic $1/\zeta^r(s)$ or $\zeta^r(s)$, respectively. In these cases, $A(s)$ is called a ``mollifier" or an ``amplifier", respectively. When $\alpha_n\approx \mu_r(n)$, the asymptotic formulae for \eqref{originaltwist} and for 
\begin{equation}\label{modifiedtwist}
\int_{0}^{T}|\zeta(\tfrac12+it)|^{4}|A(\tfrac12+it)|^4dt
\end{equation}
were evaluated in \cite{B,BH}, and when $\alpha_n\approx d_r(n)$, the asymptotic formula for \eqref{originaltwist} was obtained in \cite{BM}. Our first objective in this paper is to establish the asymptotic formula for \eqref{modifiedtwist} when $\alpha_n\approx d_r(n)$.

Let $P(x)=\sum_{j}c_jx^j$ be a polynomial and let $A(s)$ be an amplifier of the form
\[
A(s):=A(s,P) = \sum_{n\leq y}\frac{d_r(n)P(\frac{\log y/n}{\log{y}})}{n^s},
\]
with $y=T^\vartheta$, $0<\vartheta<1$, and $r\in\mathbb{N}$\footnote{Our main result is for $r\in\mathbb{N}$ but can be easily modified to hold for all $r>0$.}. Let $w(t)$ be a smooth function with support in $[1,2]$ and satisfies $w^{(j)}(t)\ll_j T^\varepsilon$ for any $j\geq 0$. We consider the following slightly more general integral
\begin{align*}
I(\underline{\alpha},\underline{\beta})&=\int_{-\infty}^{\infty}\zeta(\tfrac{1}{2}+\alpha_1+it)\zeta(\tfrac{1}{2}+\alpha_2+it)\zeta(\tfrac{1}{2}+\alpha_3-it)\zeta(\tfrac{1}{2}+\alpha_4-it)\\
&\quad\times A(\tfrac{1}{2}+\beta_1+it)A(\tfrac{1}{2}+\beta_2+it)A(\tfrac{1}{2}+\beta_3-it)A(\tfrac{1}{2}+\beta_4-it)w\Big(\frac tT\Big)dt,
\end{align*}
where the shifts $\alpha_j,\beta_j\ll (\log T)^{-1}$.
Our main theorem is the following result.

\begin{theorem}\label{mainthm}
For any $\vartheta <1/8$ we have
\[
I(\underline{\alpha},\underline{\beta})=\frac{\widehat{\omega}(0)a_{2r+2}c(\underline{\alpha},\underline{\beta})}{2((r-1)!)^8((r^2-1)!)^4}T(\log y)^{4r^2+8r}(\log T)^{4}+O\big(T(\log T)^{4(r+1)^2-1}\big),
\]
where
\[
a_r=\prod_{p}\bigg(\Big(1-\frac{1}{p}\Big)^{r^2}\sum_{j=0}^{\infty}\frac{d_r(p^j)^2}{p^j}\bigg)
\]
and
\begin{align*}
	&c(\underline{\alpha},\underline{\beta})= \idotsint\limits_{\substack{0\leq v_j,t_j,x_j,z_j\leq 1\\t_1+t_2+x_3+z_3\leq1\\t_3+t_4+x_4+z_4\leq1\\t_1+t_3+x_1+z_1\leq 1\\t_2+t_4+x_2+z_2\leq 1 }}\big(1-\vartheta\sum x_j\big)\big(1-\vartheta\sum z_j\big)\\
&\quad\times\Big(v_1-v_2+\vartheta\big(x_1+x_2-z_1-z_2-v_1\sum x_j+v_2\sum z_j\big)\Big)\\
&\quad\times\Big(v_1-v_2+\vartheta\big(x_3+x_4-z_3-z_4-v_1\sum x_j+v_2\sum z_j\big)\Big)\\
&\quad\times y^{-\beta_1(t_1+t_2+x_3+z_3)-\beta_2(t_3+t_4+x_4+z_4)-\beta_3(t_1+t_3+x_1+z_1)-\beta_4(t_2+t_4+x_2+z_2)}\\
&\quad\times y^{-\alpha_1(x_1+x_2)-\alpha_2(z_1+z_2)-\alpha_3(x_3+x_4)-\alpha_4(z_3+z_4)}(Ty^{-\sum x_j})^{-(\alpha_1+\alpha_3)v_1}(Ty^{-\sum z_j})^{-(\alpha_2+\alpha_4)v_2}\\
&\quad\times \Big(T^{v_1-v_2}y^{x_1+x_2-z_1-z_2-v_1\sum x_j+v_2\sum z_j}\Big)^{(\alpha_1-\alpha_2)v_3}\\
&\quad\times\Big(T^{v_1-v_2}y^{x_3+x_4-z_3-z_4-v_1\sum x_j+v_2\sum z_j}\Big)^{(\alpha_3-\alpha_4)v_4}\\
&\quad\times (x_1x_2x_3x_4z_1z_2z_3z_4)^{r-1}(t_1t_2t_3t_4)^{r^2-1}P(1-t_1-t_2-x_3-z_3)\\
&\quad\times P(1-t_3-t_4-x_4-z_4)P(1-t_1-t_3-x_1-z_1)P(1-t_2-t_4-x_2-z_2)\\
&\quad\times dx_1dx_2dx_3dx_4dz_1dz_2dz_3dz_4dt_1dt_2dt_3dt_4dv_1dv_2dv_3dv_4
\end{align*}
uniformly for $\alpha_j,\beta_j\ll (\log T)^{-1}$.
\end{theorem}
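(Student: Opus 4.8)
The plan is to reduce $I(\underline\alpha,\underline\beta)$ to the shifted twisted fourth moment of Bettin, Bui, Li and Radziwi\l\l\ \cite{BBLR} and then to carry out the ensuing arithmetic. First I would collapse the four amplifier factors, two on each side of the critical line, into a single Dirichlet polynomial on each side: writing
\[
A(\tfrac12+\beta_1+it)A(\tfrac12+\beta_2+it)=\sum_{h\le y^2}\frac{c^+_h}{h^{1/2+it}},\qquad A(\tfrac12+\beta_3-it)A(\tfrac12+\beta_4-it)=\sum_{k\le y^2}\frac{c^-_k}{k^{1/2-it}},
\]
where
\[
c^+_h=\sum_{\substack{n_1n_2=h\\ n_1,n_2\le y}}\frac{d_r(n_1)d_r(n_2)}{n_1^{\beta_1}n_2^{\beta_2}}\,P\Big(1-\frac{\log n_1}{\log y}\Big)P\Big(1-\frac{\log n_2}{\log y}\Big)
\]
and $c^-_k$ is the same expression with $\beta_3,\beta_4$. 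For $r\in\mathbb N$ both coefficient families are supported on $[1,y^2]$ and satisfy $c^\pm_m\ll m^\varepsilon$; since $d_r(n)\ll n^\varepsilon$ for every real $r>0$, this is also the place where the footnote's remark enters. After this rewriting $I(\underline\alpha,\underline\beta)$ is precisely a fourth moment of $\zeta$ with shifts $\underline\alpha$, twisted by the two Dirichlet polynomials $\sum_h c^+_h h^{-1/2-it}$ and $\sum_k c^-_k k^{-1/2+it}$, each of length $y^2$. The asymptotic of \cite{BBLR} applies as soon as the product of the two lengths is at most $T^{1/2-\varepsilon}$, i.e. $y^2\cdot y^2=T^{4\vartheta}\le T^{1/2-\varepsilon}$; this is exactly the restriction $\vartheta<1/8$, and the error term it provides is power-saving in $T$, hence negligible against everything below.

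Inserting that asymptotic, the main term is, by the $\mathrm{CFKRS}$ recipe, a sum of $\binom{4}{2}=6$ pieces indexed by the ``swaps'' $\sigma$ that interchange a subset of $\{\alpha_1,\alpha_2\}$ with an equinumerous subset of $\{-\alpha_3,-\alpha_4\}$. Each piece is an archimedean integral $\int_{\mathbb R}(t/2\pi)^{-(\text{sum of the swapped shifts})}w(t/T)\,dt$, times a product of four zeta-values of the shape $\zeta(1+\alpha_i+\alpha_j)$ with the relevant arguments swapped, times a multiple Dirichlet series
\[
\mathcal D_\sigma(\underline\alpha,\underline\beta)=\sum_{h,k}\frac{c^+_hc^-_k}{\sqrt{hk}}\,\mathfrak a_\sigma(h,k;\underline\alpha),
\]
whose coefficient $\mathfrak a_\sigma$ is a multiplicative function of $h,k$ built from local zeta factors and depending on $\sigma$. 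The archimedean integrals collectively produce the factor $T\widehat\omega(0)$ and, once combined with the Perron integrals below, the various shift exponentials in $c(\underline\alpha,\underline\beta)$. All the real content is in evaluating the six series $\mathcal D_\sigma$, uniformly for $\alpha_j,\beta_j\ll(\log T)^{-1}$.

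For each $\sigma$ I would open the amplifier coefficients, so that $\mathcal D_\sigma$ becomes a fourfold Dirichlet series in the amplifier variables $n_1,n_2,n_3,n_4$ carrying the weights $d_r(n_i)$ and $P(1-\tfrac{\log n_i}{\log y})$, and then factor it as an Euler product. The analytic heart is that this Euler product is, near the central point, a product of Riemann zeta-functions --- carrying all the poles --- times an absolutely convergent, non-vanishing Dirichlet series whose value at the central point, once the six $\sigma$ are collected, is exactly the constant $a_{2r+2}$. Tracking the pole structure carefully one finds eight poles of order $r$ and four of order $r^2$ (total order $8r+4r^2$ in the $y$-aspect, and $4(r+1)^2$ in all, matching the heuristic $|\zeta|^4|A|^4\approx|\zeta|^{4+4r}$), arising from the interplay of the divisor weights $d_r$ with the zeta factors inside $\mathfrak a_\sigma$ and the recipe. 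I would then represent the restrictions $n_i\le y$ and $h,k\le y^2$, together with the weights $P$, by Perron-type integrals, and perform the resulting multi-dimensional residue computation. Each order-$r$ pole contributes $1/(r-1)!$ and an integration variable --- an $x_j$ or a $z_j$ --- carrying $x_j^{r-1}$; each order-$r^2$ pole contributes $1/(r^2-1)!$ and a variable $t_j$ carrying $t_j^{r^2-1}$; the recipe integrals contribute $v_1,\dots,v_4$, whence the surviving $(\log T)^4$; the four weights $P$ turn into the four factors $P(1-t_i-t_j-x_k-z_k)$; and the support conditions become the five linear inequalities cutting out the region of integration. Assembling the six contributions with the archimedean factor yields the claimed leading term, with the overall constant $1/\big(2((r-1)!)^8((r^2-1)!)^4\big)$ and the integral $c(\underline\alpha,\underline\beta)$, everything of lower log-power being absorbed into $O(T(\log T)^{4(r+1)^2-1})$.

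The hard part is exactly this last step: organising the residue calculus across all six swaps; determining which collisions of poles actually produce the top power $(\log T)^{4(r+1)^2}$ rather than a smaller one; verifying that the six leftover Euler products reassemble into the single constant $a_{2r+2}$; carrying the polynomial weights $P$ cleanly through the Perron integrals so that they emerge as $P(1-t_i-t_j-x_k-z_k)$; and keeping everything uniform in the shifts $\alpha_j,\beta_j\ll(\log T)^{-1}$ throughout, in particular handling the several $\zeta(1+\alpha_i+\alpha_j)$ that sit next to their pole and checking that the spurious poles cancel among the six terms. By comparison, the reduction to \cite{BBLR} and the comparison of its error term with the main term are routine.
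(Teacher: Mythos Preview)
Your outline is essentially the paper's own approach: collapse $A^2$ on each side into a single Dirichlet polynomial of length $y^2$, invoke \cite{BBLR} (the error term $O_\varepsilon(T^{1/2+\varepsilon}x^2+T^{3/4+\varepsilon}x)$ with $x=y^2$ gives exactly $\vartheta<1/8$), split into six swap terms, factor each as an Euler product whose singular part is a product of zeta-values (four of order~$1$, eight of order~$r$, four of order~$r^2$) and whose regular part at the origin is $a_{2r+2}$, and then extract the main term via contour shifts and Mellin/Perron representations of the smoothings $P$. The paper implements the latter not as a raw residue computation but through three preparatory lemmas (partial summation for $\sum d_r(n)n^{-1-\alpha}f(\cdot)$, a two-by-two analogue for $\sum d_{r^2}(n_1)\cdots d_{r^2}(n_4)$, and an integral evaluation of $\tfrac{1}{2\pi i}\int (\tfrac{y}{n})^u\zeta^r(1+\alpha+u)\zeta^r(1+\beta+u)\,u^{-j-1}du$), which produce the variables $t_j$, $x_j$, $z_j$ and the polynomial factors $P(1-\cdots)$ cleanly.

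Where your sketch is vaguest is precisely where the paper does something specific: the variables $v_1,\dots,v_4$ do \emph{not} arise as ``recipe integrals'' in any generic sense. After evaluating each of the six swap terms separately, one is left with the sum $U(\underline x,\underline z)$ of six expressions with denominators of the form $(\alpha_i+\alpha_j)(\alpha_k+\alpha_l)(\alpha_m+\alpha_n)(\alpha_p+\alpha_q)$. The paper then (i) applies the partial-fraction identity
\[
\frac{1}{(\alpha_1+\alpha_3)(\alpha_1+\alpha_4)(\alpha_2+\alpha_3)(\alpha_2+\alpha_4)}
=\frac{1}{(\alpha_1-\alpha_2)(\alpha_3-\alpha_4)}\Big(\frac{1}{(\alpha_1+\alpha_3)(\alpha_2+\alpha_4)}-\frac{1}{(\alpha_1+\alpha_4)(\alpha_2+\alpha_3)}\Big),
\]
(ii) exploits the symmetry that swapping any pair $x_j\leftrightarrow z_j$ in a single term of $U$ leaves the full integral unchanged, and (iii) thereby recombines the six pieces into a product of two factors of the shape $\dfrac{1-X^{-(\alpha_i+\alpha_j)}}{\alpha_i+\alpha_j}$, divided by $(\alpha_1-\alpha_2)(\alpha_3-\alpha_4)$. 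Writing each such ratio as $(\log X)\int_0^1 X^{-(\alpha_i+\alpha_j)v}\,dv$ introduces $v_1,v_2$ and two powers of $\log T$; a second application of the same symmetry trick then produces factors $\dfrac{1-Y^{\alpha_1-\alpha_2}}{\alpha_1-\alpha_2}$ and $\dfrac{1-Y^{\alpha_3-\alpha_4}}{\alpha_3-\alpha_4}$, whose integral representations give $v_3,v_4$ and the remaining two powers of $\log T$. This is the step that both removes the apparent singularities at $\alpha_i+\alpha_j=0$ and $\alpha_i-\alpha_j=0$ and yields the explicit integrand of $c(\underline\alpha,\underline\beta)$; you have correctly flagged it as the hard part, but your description does not yet contain this mechanism.
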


Theorem \ref{mainthm} has various applications to the theory of the Riemann zeta-function, e.g. gaps between zeros of $\zeta(s)$ and lower bounds for moments. In fact, the latter has already been worked out in \cite{P1}, where Page obtained unconditional lower bounds for the sixth and eighth moments of $\zeta(s)$ utilizing our Theorem \ref{mainthm}. Here we shall demonstrate the former application, which concerns the vertical distribution of the zeros of the Riemann zeta-function on the critical line Re$(s)=1/2$. 

Let $t_n$ be the imaginary part of the $n$-th critical zero of the Riemann zeta-function of the form $\rho=1/2+it$ with $t>0$. All the following results in the paper apply to the sequence $\{t_n\}_{n\in\mathbb{N}}$ and are unconditional.

It is well-known that
\begin{equation}\label{NT}
N(T):=\sum_{0<\gamma\leq T}1=\frac{T\log T}{2\pi}-\frac{T}{2\pi}+O(\log T).
\end{equation}
We define
\begin{equation*}
\Lambda:=\limsup_{n\rightarrow\infty}\frac{t_{n+1}-t_n}{2\pi/\log t_n}.
\end{equation*}
It follows from the Riemann Hypothesis and \eqref{NT} that the average size of $\frac{t_{n+1}-t_n}{2\pi/\log t_n}$ is 1 as $n\rightarrow\infty$, and clearly \eqref{NT} implies that $\Lambda\geq1$. Using the pair correlation of the zeros of the Riemann zeta-function, Montgomery \cite{M} conjectured that $\Lambda=\infty$. That is to say that there exists arbitrarily large normalized gaps $\frac{t_{n+1}-t_n}{2\pi/\log t_n}$ between consecutive zeros on the critical line. The random matrix model even suggests that the normalized gaps  should get as large as $\sqrt{\log t_n}$ \cite[Section 1.3]{BAB}.

Selberg \cite[p.199]{S} remarked that he could prove $\Lambda > 1$, but did not provide any proof. All the other known lower bounds for $\Lambda$ were established using one of the two approaches, which were both due to the second named author and rely on some variants of the Wirtinger inequality and moments of the Riemann zeta-function. We shall describe them in details in Section \ref{sectionW}.

Hall showed that $\Lambda\geq\sqrt{11/2}=2.34\ldots$ in \cite{H2} and subsequently improved it to $\Lambda>2.63$ in \cite{H}. The former was achieved by using a variant of the Wirtinger inequality involving the $L^4$-norm of the Hardy $Z$-function  and its derivative (see Theorem \ref{Wi}), while the latter was proved by employing the standard $L^2$ form of the Wirtinger inequality applied to a product of two Hardy $Z$-functions with shifts. Hall's results were later improved to $\Lambda>2.76$ by Bredberg \cite{B1}, and the current best result is due to Bui and Milinovich \cite{BM}, where they obtained that $\Lambda>3.18$. Both these papers follow the second approach of Hall's. We shall show that Hall's first approach and our Theorem \ref{mainthm} lead to the following result.

\begin{theorem}\label{gapthm}
We have
\[
\Lambda>2.64.
\]
\end{theorem}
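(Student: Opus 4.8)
The plan is to follow Hall's first approach via the $L^4$ Wirtinger-type inequality (Theorem~\ref{Wi}) applied to Hardy's $Z$-function, using Theorem~\ref{mainthm} as the arithmetic input. Suppose, for contradiction, that all normalized gaps eventually satisfy $t_{n+1}-t_n \le \kappa \cdot 2\pi/\log t_n$ for some $\kappa$ slightly above the target value $2.64$. Between consecutive zeros $t_n, t_{n+1}$ the function $Z(t)$ does not vanish, so the $L^4$ Wirtinger inequality applied on each such interval produces a bound comparing $\int_{t_n}^{t_{n+1}} Z(t)^4\,dt$ against $(\text{gap})^4 \int_{t_n}^{t_{n+1}} Z'(t)^4\,dt$ with an explicit constant (coming from the extremal eigenvalue/Wirtinger constant for the $L^4$ problem, which is where the quantity $\sqrt{11/2}$ entered in \cite{H2}). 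Summing over all zeros up to height $T$ and using the gap hypothesis converts this into an inequality of the shape
\[
\int_0^T Z(t)^4\,dt \le \Big(\frac{\kappa}{\log T}\Big)^4 C \int_0^T Z'(t)^4\,dt + (\text{error}).
\]

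Next I would insert the amplifier: rather than working with $Z(t)^4$ directly, one works with $|Z(t)A(\tfrac12+it)|^4$ (equivalently $|\zeta(\tfrac12+it)A(\tfrac12+it)|^4$), because multiplying by the amplifier $A$ does not change the location of the zeros of $Z$ but does change the relative sizes of the two integrals in a way we can optimize. Both the numerator integral $\int |\zeta A|^4$ and — after differentiating the product and expanding — the derivative integral $\int |(\zeta A)'|^4$ (and the mixed terms $\int |\zeta'A + \zeta A'|^4$ etc.) are expressible as linear combinations of $I(\underline\alpha,\underline\beta)$ with the shifts $\alpha_j,\beta_j$ differentiated and then set to (near) zero, via the standard contour-shifting / Cauchy-integral trick: a factor like $\zeta'(\tfrac12+it)$ is recovered as $\partial_{\alpha}\zeta(\tfrac12+\alpha+it)|_{\alpha=0}$. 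So Theorem~\ref{mainthm} gives asymptotic formulas $\int_0^T |\zeta A|^4 w \sim M_0 \, T(\log T)^{4r^2+8r+4}$ and the analogous relation for the derivative integrals with computable leading constants $M_1, M_2$, all as polynomials in the coefficients $c_j$ of $P$ times the arithmetic factor and the integral $c(\underline\alpha,\underline\beta)$ specialized at zero shifts. Combining with the smooth dyadic decomposition $w(t/T)$ summed over dyadic $T$ to recover the sharp-cutoff integrals, the Wirtinger inequality becomes a numerical inequality relating $\kappa^4$ to a ratio of these constants.

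Finally, the optimization: the constants $M_0, M_1, M_2$ depend on the polynomial $P$ (and, more trivially, on the amplifier length parameter $\vartheta < 1/8$ and on $r$), so we are free to choose $P$ (a polynomial of modest degree) and the parameters to minimize the resulting upper bound on $\kappa$. This reduces to a finite-dimensional constrained optimization over the coefficients of $P$, which one carries out numerically; pushing $\vartheta$ as close to $1/8$ as Theorem~\ref{mainthm} allows and taking $r$ and $\deg P$ suitably, the optimal value of $\kappa$ comes out below $2.64$, contradicting the gap hypothesis and hence proving $\Lambda > 2.64$. The main obstacle — and the step requiring genuine care rather than routine bookkeeping — is precisely this optimization together with the exact determination of the derivative-integral constants: differentiating $c(\underline\alpha,\underline\beta)$ in the shifts and evaluating at zero produces a somewhat intricate rational/integral expression, and one must track the combinatorics of which $\alpha_j$ and $\beta_j$ are differentiated (to produce $Z'$ versus $A'$ factors and the cross terms) so that the Wirtinger ratio is assembled correctly; getting a clean enough closed form to feed into the numerical optimizer, and verifying the optimizer genuinely beats the previous constant, is where the real work lies.
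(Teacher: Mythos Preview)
Your plan is essentially the paper's approach --- Hall's $L^4$ Wirtinger inequality (Theorem~\ref{Wi}) combined with an amplifier, with Theorem~\ref{mainthm} supplying the moment asymptotics --- but two execution details differ in ways that matter. First, the paper does not take $f=ZA$; it takes $f(t)=e^{iu(\log T)t}\zeta(\tfrac12+it)A(\tfrac12+it)$ with a free real parameter $u$. Since $Z(t)=e^{i\theta(t)}\zeta(\tfrac12+it)$ with $\theta'(t)\sim\tfrac12\log t$, your choice $f=ZA$ effectively fixes $u\approx 1/2$, whereas the paper optimizes and uses $u=0.6$. This twist is precisely why Theorem~\ref{Wi} had to be extended to complex-valued functions. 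Second, your sketch of the Wirtinger step keeps only two terms ($\int|f|^4$ versus $\int|f'|^4$), but Theorem~\ref{Wi} has three: the mixed integral $\int|ff'|^2$ with parameter $v$ is essential --- Beesack's two-term inequality alone gives only $2.26\ldots$, not even Hall's $\sqrt{11/2}$. The paper then performs \emph{no} optimization over $P$ or $r$: it simply sets $r=1$, $P(x)\equiv 1$, $\vartheta=0.1249$, $u=0.6$, $v=0.4$ and verifies the resulting inequality \eqref{keyineq3} numerically. Your proposed optimization over $P$ is plausible (the paper's Remark suggests it should do even better), but it is a substantially harder computation and not how $2.64$ is actually obtained.
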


\begin{remark}
\emph{We have made no attempt to optimize the numerical bound in Theorem \ref{gapthm}. Our goal is to demonstrate the use of Theorem \ref{mainthm} in the study of gaps between zeros of the Riemann zeta-function. Theorem \ref{gapthm} is obtained with the simplest choice $r=1$ and $P(x)=1$. It is probable that with a higher degree polynomial $P(x)$ the bound for $\Lambda$ can be significantly improved.}
\end{remark}


\section{Auxiliary lemmas}

\begin{lemma}\label{6001}
Let $r\in\mathbb{N}$. Suppose $y_1\leq y_2$, $\alpha\ll(\log y_1)^{-1}$, and that $f$ and $g$ are smooth functions. Then we have
\begin{eqnarray*}
&&\sum_{n\leq y_1}\frac{d_r(n)}{n^{1+\alpha}}f\Big(\frac{\log y_1/n}{\log y_1}\Big)g\Big(\frac{\log y_2/n}{\log y_2}\Big)\\
&&\qquad=\frac{(\log y_1)^r}{(r-1)!}\int_{0}^{1}y_{1}^{-\alpha t}t^{r-1}f(1-t)g\Big(1-\frac{t\log y_1}{\log y_2}\Big)dt+O\big((\log y_1)^{r-1}\big).
\end{eqnarray*}
\end{lemma}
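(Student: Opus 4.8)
The plan is to reduce the sum to a contour integral via Perron's formula and then analyze the resulting integral using the known analytic behavior of $\sum_n d_r(n) n^{-s} = \zeta(s)^r$. First I would write
\[
\sum_{n\leq y_1}\frac{d_r(n)}{n^{1+\alpha}}f\Big(\frac{\log y_1/n}{\log y_1}\Big)g\Big(\frac{\log y_2/n}{\log y_2}\Big)=\frac{1}{2\pi i}\int_{(c)}\zeta(1+\alpha+s)^r\,F(s)\,\frac{y_1^{s}}{s}\,ds
\]
for a suitable $c>0$, where the smooth cutoffs $f$ and $g$ have been encoded through a Mellin-type transform; concretely, since $f\big(\tfrac{\log y_1/n}{\log y_1}\big)$ and $g\big(\tfrac{\log y_2/n}{\log y_2}\big)$ are smooth functions of $\log n$, one expresses their product as an inverse transform in the variable $s$ with rapidly decaying kernel, so that $F(s)$ (depending on $f$, $g$, $\log y_1$, $\log y_2$) is holomorphic and decays faster than any polynomial on vertical lines. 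The factor $\zeta(1+\alpha+s)^r$ has a pole of order $r$ at $s=-\alpha$.

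Next I would move the contour to the left past the pole at $s=-\alpha$. The main term comes from the residue at $s=-\alpha$: since $\zeta(1+\alpha+s)^r \sim (s+\alpha)^{-r}$ near that point, the residue of the order-$r$ pole produces, after differentiating $r-1$ times, the leading term of size $(\log y_1)^r/(r-1)!$ times an integral of $y_1^{-\alpha t}t^{r-1}$ against the transforms of $f$ and $g$; rewriting that integral back in ``physical'' variables gives exactly
\[
\frac{(\log y_1)^r}{(r-1)!}\int_{0}^{1}y_1^{-\alpha t}t^{r-1}f(1-t)g\Big(1-\frac{t\log y_1}{\log y_2}\Big)dt.
\]
The condition $y_1\le y_2$ ensures the argument $1 - t\log y_1/\log y_2$ stays in $[0,1]$ so $g$ is evaluated where it is defined. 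The shifted contour integral, being over a region where $\zeta(1+\alpha+s)^r$ is bounded by a power of $\log$ (using $\alpha\ll(\log y_1)^{-1}$ to stay near the $1$-line and standard bounds for $\zeta$ just to the left of it) and where $F(s)$ decays rapidly, contributes only $O((\log y_1)^{r-1})$.

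An alternative, more elementary route that avoids $\zeta$-bounds in the critical strip is to proceed by induction on $r$: for $r=1$ one evaluates $\sum_{n\le y_1} n^{-1-\alpha} f(\cdots)g(\cdots)$ directly by partial summation against $\sum_{n\le x}1 = x + O(1)$, which gives the stated formula with $t^{0}=1$; for the inductive step one uses the hyperbola identity $d_{r}(n)=\sum_{d\mid n}d_{r-1}(n/d)$ to write the sum as a double sum, applies the $r=1$ case (equivalently partial summation) to the inner variable, and feeds the resulting smooth weight into the induction hypothesis for $d_{r-1}$, checking that the error terms accumulate only to $O((\log y_1)^{r-1})$ and that the iterated integral telescopes to the single integral with the factor $t^{r-1}/(r-1)!$. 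I would present the contour-integral argument as the main proof since it is cleaner, but the induction is a useful sanity check. The main obstacle in either approach is bookkeeping: correctly tracking the combinatorial constant $1/(r-1)!$ and the power $t^{r-1}$ that emerge from the order-$r$ pole (respectively from the $r$-fold convolution), and verifying uniformity in $\alpha$ throughout the range $\alpha\ll(\log y_1)^{-1}$, for which the elementary substitution $n = y_1^{1-t}$ and the bound $y_1^{-\alpha t}\asymp 1$ are the key simplifications.
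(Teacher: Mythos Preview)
The paper does not actually prove this lemma: it simply cites \cite[Lemma~4.4]{BCY}. So there is no in-paper argument to compare against, and your sketch should be judged on its own.

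Both of your routes are standard and essentially sound. One caveat on the contour-integral version: the step ``encode the smooth cutoffs through a Mellin-type transform with rapidly decaying kernel $F(s)$'' is not quite free for general smooth $f,g$. The sum carries a \emph{hard} truncation $n\le y_1$, and if $f(0)\ne 0$ the combined weight $\mathbf{1}_{n\le y_1}\,f(\cdots)g(\cdots)$ is not smooth at the edge; Perron's kernel $y_1^{s}/s$ only decays like $1/|s|$ on vertical lines, so you cannot simply assert ``$F(s)$ decays faster than any polynomial'' without extra work (e.g.\ extending $f,g$ smoothly and then separating the boundary contribution, or restricting to polynomial $f,g$ as is the case in every application in this paper). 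This is a bookkeeping issue rather than a fatal one, but your write-up glosses over it.

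Your ``alternative'' is in fact the cleanest route, and you can shortcut the induction: the classical asymptotic
\[
\sum_{n\le x} d_r(n)=\frac{x(\log x)^{r-1}}{(r-1)!}+O\big(x(\log x)^{r-2}\big)
\]
is available for all $r\in\mathbb{N}$ at once, and a single Abel summation against the smooth weight $n^{-1-\alpha}f\big(\tfrac{\log y_1/n}{\log y_1}\big)g\big(\tfrac{\log y_2/n}{\log y_2}\big)$, followed by the substitution $u=y_1^{t}$, yields exactly
\[
\frac{(\log y_1)^r}{(r-1)!}\int_0^1 y_1^{-\alpha t}\,t^{r-1}f(1-t)\,g\Big(1-\frac{t\log y_1}{\log y_2}\Big)\,dt+O\big((\log y_1)^{r-1}\big),
\]
with the error coming from the $O(x(\log x)^{r-2})$ term and the boundary contributions. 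This avoids both the Mellin-decay issue and the inductive bookkeeping you flagged, and is presumably close to what \cite{BCY} does.
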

\begin{proof}
See \cite[Lemma 4.4]{BCY}.
\end{proof}

The next lemma is a generalization of \cite[Lemma 1]{BH}.

\begin{lemma}\label{EM}
Let $r\in\mathbb{N}$. Suppose $f_j$ are smooth functions for $1\leq j\leq 4$. Then we have
\begin{align*}
&\sum_{\substack{n_1n_2,n_3n_4\leq y\\n_1n_3,n_2n_4\leq y}}\frac{d_{r}(n_1)d_{r}(n_2)d_{r}(n_3)d_{r}(n_4)}{n_1^{1+\alpha_1}n_2^{1+\alpha_2}n_3^{1+\alpha_3}n_4^{1+\alpha_4}}\\
&\qquad\qquad\times f_1\Big(\frac{\log y/n_1n_2}{\log y}\Big)f_2\Big(\frac{\log y/n_3n_4}{\log y}\Big)f_3\Big(\frac{\log y/n_1n_3}{\log y}\Big)f_4\Big(\frac{\log y/n_2n_4}{\log y}\Big)\\
&\quad=\frac{(\log y)^{4r}}{((r-1)!)^4}\idotsint\limits_{\substack{0\leq t_j\leq 1\\t_1+t_2,t_3+t_4\leq1\\t_1+t_3,t_2+t_4\leq1 }}\ y^{-\alpha_1t_1-\alpha_2t_2-\alpha_3t_3-\alpha_4t_4}(t_1t_2t_3t_4)^{r-1}f_1(1-t_1-t_2)\\
&\qquad\qquad\times f_2(1-t_3-t_4)f_3(1-t_1-t_3)f_4(1-t_2-t_4)dt_1dt_2dt_3dt_4+O\big((\log y)^{4r-1}\big).
\end{align*}
\end{lemma}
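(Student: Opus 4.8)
The plan is to reduce the four-fold coupled sum to an iterated application of Lemma \ref{6001} (the single-variable Perron-type estimate of Bui--Conrey--Young), peeling off one Dirichlet convolution at a time. First I would fix $n_1, n_3$ and isolate the inner sum over $n_2$ (or, symmetrically, over $n_4$). Observe that the three constraints involving $n_2$ are $n_2 \le y/n_1$, $n_2 \le y/n_4$ — wait, the coupling is genuinely through all four indices, so a cleaner route is to first sum over $n_4$ alone: the conditions on $n_4$ are $n_4 \le y/n_3$ and $n_4 \le y/n_2$, i.e. $n_4 \le \min(y/n_2, y/n_3)$, and $f_2, f_4$ are the only factors depending on $n_4$. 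Applying Lemma \ref{6001} with $y_1 = \min(y/n_2, y/n_3)$ appearing inside the cutoff functions — more precisely, writing $f_2\big(\tfrac{\log(y/n_3n_4)}{\log y}\big)$ and $f_4\big(\tfrac{\log(y/n_2n_4)}{\log y}\big)$ as functions of $\tfrac{\log(y_1/n_4)}{\log y_1}$ times $\tfrac{\log y_1}{\log y}$ for the appropriate choice of $y_1$ — produces a main term of size $(\log y)^r$ times an integral over a new variable, call it $t_4$, with weight $t_4^{r-1}$ and an error $O((\log y)^{r-1})$.

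The key technical point, and what I expect to be the main obstacle, is bookkeeping the error propagation: at each of the four stages we incur a multiplicative error of relative size $(\log y)^{-1}$, but these errors are themselves sums over the remaining indices of $d_r$ against powers of $\log$, so one must check that $\sum_{m \le Y} d_r(m)/m \ll (\log Y)^r$ and more generally that the tail sums against $(\log y_1)^{r-1}$ converge to $O((\log y)^{4r-1})$ overall rather than degrading. This is routine but must be done carefully because the four constraints are not independent — the region $\{n_1n_2 \le y,\ n_3n_4 \le y,\ n_1n_3 \le y,\ n_2n_4 \le y\}$ is the ``tetrahedral'' region that after the change of variables $n_j = y^{t_j}$ (roughly) becomes $\{t_1+t_2 \le 1,\ t_3+t_4 \le 1,\ t_1+t_3 \le 1,\ t_2+t_4 \le 1\}$, exactly the domain in the claimed integral.

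After the $n_4$-sum I would sum over $n_2$: its surviving constraints are $n_2 \le y/n_1$ and (from the $f_4$-integral, whose upper cutoff now reads $y/n_2$) again a $\min$ with a power of $y/n_3 \cdot(\text{something})$; applying Lemma \ref{6001} again yields $(\log y)^r$ and a $t_2$-integral with weight $t_2^{r-1}$. Then $n_3$, then $n_1$, each contributing a factor $(\log y)^r / (r-1)!$ and a new integration variable, so the four stages together give $(\log y)^{4r}/((r-1)!)^4$, matching the stated constant. Throughout, the shift factors $n_j^{-\alpha_j}$ are carried inside Lemma \ref{6001} (which already accommodates the $y_1^{-\alpha t}$ factor since $\alpha_j \ll (\log y)^{-1} \le (\log y_1)^{-1}$ once $y_1 \asymp y$, the only regime contributing to the main term), producing the $y^{-\alpha_1 t_1 - \cdots - \alpha_4 t_4}$ weight. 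Finally I would verify that the arguments of the $f_j$ assemble correctly: after substituting $t_j$ for the normalized logarithm of $n_j$, the argument of $f_1$ becomes $1 - t_1 - t_2$, of $f_2$ becomes $1 - t_3 - t_4$, and so on, and the constraints on the $t_j$ become precisely those in the displayed multiple integral. Collecting the four error terms via the convergence estimates above gives the total error $O((\log y)^{4r-1})$, completing the proof; the whole argument runs parallel to \cite[Lemma 1]{BH}, which is the $r=1$ case, so I would point to that for the combinatorial details and emphasize only the modifications needed for general $r \in \mathbb{N}$.
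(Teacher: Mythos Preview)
Your overall strategy---iterate Lemma \ref{6001} four times to peel off the variables---is exactly the paper's. The one place your sketch is genuinely incomplete is the handling of the constraint $n_4 \le \min(y/n_2, y/n_3)$: Lemma \ref{6001} needs a fixed ordered pair $y_1 \le y_2$, so you cannot apply it with $y_1 = \min(y/n_2, y/n_3)$ unless you already know which of $n_2, n_3$ is larger (otherwise the ``smooth function'' $f$ in the lemma would itself be piecewise in the outer variables). The paper resolves this by splitting the whole sum at the outset into $A_1$ (where $n_2 \le n_3$) and $A_2$ (where $n_3 \le n_2$); in $A_1$ both $n_1$ and $n_4$ are then simply constrained by $n_1, n_4 \le y/n_3$, and Lemma \ref{6001} applies directly and independently to each. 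The order is $n_1$ and $n_4$ first (each summed to $y/n_3$), then $n_2$ (summed to $n_3$), then $n_3$ (summed to $y$), followed by the substitutions $t_1(1-t_3)\to t_1$, $t_2 t_3\to t_2$, $t_4(1-t_3)\to t_4$ to land on the stated polytope. Since you defer to \cite[Lemma 1]{BH} for the combinatorics and that proof uses precisely this split, your proposal is correct in substance; just make the case split explicit rather than trying to push a $\min$ through Lemma \ref{6001}.
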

\begin{proof}
We write
\begin{align*}
\sum_{\substack{n_1n_2,n_3n_4\leq y\\n_1n_3,n_2n_4\leq y}}&=\sum_{n_2\leq n_3\leq y}\sum_{n_1,n_4\leq y/n_3}+\sum_{n_3\leq n_2\leq y}\sum_{n_1,n_4\leq y/n_2}+\, O\big((\log y)^{2r}\big)\\
&=A_1+A_2+O\big((\log y)^{2r}\big),
\end{align*}
say.
By Lemma \ref{6001}, if $n_2\leq n_3$, then we have
\begin{align*}
&\sum_{\substack{n_1\leq y/n_3}}\frac{d_r(n_1)}{n_1^{1+\alpha_1}}f_1\Big(\frac{\log y/n_1n_2}{\log y}\Big)f_3\Big(\frac{\log y/n_1n_3}{\log y}\Big)=\frac{(\log \frac{y}{n_3})^r}{(r-1)!}\int_0^1\Big(\frac{y}{n_3}\Big)^{-\alpha_1t_1}t_1^{r-1}\\
&\qquad\times f_1\Big(\frac{(1-t_1)\log y/n_3}{\log y}+\frac{\log n_3/n_2}{\log y}\Big)f_3\Big(\frac{(1-t_1)\log y/n_3}{\log y}\Big)dt_1+O\big((\log y)^{r-1}\big).
\end{align*}
A similar expression holds for the sum over $n_4$ and hence
\begin{align*}
&A_1=\frac{1}{((r-1)!)^2}\int_0^1\int_0^1 y^{-\alpha_1t_1-\alpha_4t_4}(t_1t_4)^{r-1}\\
&\quad\times \sum_{n_3\leq y}\frac{d_r(n_3)}{n_3^{1+\alpha_3-\alpha_1t_1-\alpha_4t_4}}\Big(\log \frac{y}{n_3}\Big)^{2r}f_2\Big(\frac{(1-t_4)\log y/n_3}{\log y}\Big)f_3\Big(\frac{(1-t_1)\log y/n_3}{\log y}\Big)\\
&\quad\times \sum_{n_2\leq n_3}\frac{d_r(n_2)}{n_2^{1+\alpha_2}}f_1\Big(\frac{(1-t_1)\log y/n_3}{\log y}+\frac{\log n_3/n_2}{\log y}\Big)f_4\Big(\frac{(1-t_4)\log y/n_3}{\log y}+\frac{\log n_3/n_2}{\log y}\Big)dt_1dt_4\\
&\quad+O\big((\log y)^{4r-1}\big).
\end{align*}
An application of Lemma \ref{6001} to the sum over $n_2$ above, followed by another one to the sum over $n_3$ leads to
\begin{align*}
A_1&=\frac{(\log y)^{4r}}{((r-1)!)^4}\idotsint\limits_{\substack{0\leq t_j\leq 1}}y^{-\alpha_1t_1-\alpha_4t_4-(\alpha_3-\alpha_1t_1-\alpha_4t_4+\alpha_2t_2)t_3}(t_1t_2t_3t_4)^{r-1}t_3^r(1-t_3)^{2r}\\
&\qquad\times f_1\big((1-t_1)(1-t_3)+(1-t_2)t_3\big)f_2\big((1-t_4)(1-t_3)\big)f_3\big((1-t_1)(1-t_3)\big)\\
&\qquad\times f_4\big((1-t_4)(1-t_3)+(1-t_2)t_3\big)dt_1dt_2dt_3dt_4+O\big((\log y)^{4r-1}\big)\\
&=\frac{(\log y)^{4r}}{((r-1)!)^4}\int_0^1\int_0^{1-t_3}\int_0^{t_3}\int_0^{1-t_3}y^{-\alpha_1t_1-\alpha_2t_2-\alpha_3t_3-\alpha_4t_4}(t_1t_2t_3t_4)^{r-1}\\
&\qquad\times f_1(1-t_1-t_2)f_2(1-t_3-t_4)f_3(1-t_1-t_3)f_4\big(1-t_2-t_4)dt_1dt_2dt_4dt_3\\
&\qquad+O\big((\log y)^{4r-1}\big),
\end{align*}
after the changes of variables $t_1(1-t_3)\longrightarrow t_1$, $t_2t_3\longrightarrow t_2$ and $t_4(1-t_3)\longrightarrow t_4$. We obtain a similar expression for $A_2$ and the result hence follows.
\end{proof}

\begin{lemma}\label{600}
Let $r\in\mathbb{N}$ and let 
\begin{equation*}
K_j^n(\alpha,\beta)=\frac{1}{2\pi i}\int_{((\log T)^{-1})}\Big(\frac{y}{n}\Big)^u\zeta^r(1+\alpha+u)\zeta^r(1+\beta+u)\frac{du}{u^{j+1}}.
\end{equation*}
Then we have
\begin{eqnarray*}
K_j^n(\alpha,\beta)&=&\frac{(\log y/n)^{j+2r}}{((r-1)!)^2j!}\mathop{\int\int}_{\substack{0\leq x,z\leq 1\\x+z\leq 1}}\Big(\frac{y}{n}\Big)^{-\alpha x-\beta z}(xz)^{r-1}(1-x-z)^jdxdz\\
&&\qquad +O\big((\log T)^{j+2r-1}\big)
\end{eqnarray*}
uniformly for $\alpha,\beta\ll (\log T)^{-1}$.
\end{lemma}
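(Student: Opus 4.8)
The plan is to isolate the polar parts of $\zeta^r(1+\alpha+u)$ and $\zeta^r(1+\beta+u)$, thereby reducing $K_j^n(\alpha,\beta)$ to a purely ``polynomial'' contour integral which can be evaluated directly from the Gamma-function (Laplace-transform) representation of $u^{-m}$, and then to estimate the error made in this reduction. Write $L=\log(y/n)$; throughout we may assume $n<y$ (for $n\ge y$ the integral vanishes on shifting the contour to the right), so $0<L\le\vartheta\log T$. First I would move the line of integration to $\mathrm{Re}(u)=c/\log T$ for a sufficiently large absolute constant $c$. Since $\zeta(s)$ is holomorphic and zero-free in a neighbourhood of $\mathrm{Re}(s)=1$ apart from the simple pole at $s=1$, and since $\alpha,\beta\ll(\log T)^{-1}$, no poles are crossed, and on the new contour $\mathrm{Re}(u)$, $\mathrm{Re}(u+\alpha)$, $\mathrm{Re}(u+\beta)$ are all $\asymp(\log T)^{-1}$, with $|u+\alpha|,|u+\beta|\asymp|u|$ there.

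Since $\zeta(1+w)$ is meromorphic on $\mathbb{C}$ with only a simple pole, of residue $1$, at $w=0$, the function $\Phi(w):=w\,\zeta(1+w)$ is entire with $\Phi(0)=1$. Hence
\[
\zeta^r(1+\alpha+u)\zeta^r(1+\beta+u)=\frac{\Psi(u)}{(u+\alpha)^r(u+\beta)^r},\qquad \Psi(u):=\Phi(u+\alpha)^r\Phi(u+\beta)^r,
\]
with $\Psi(u)=1+O(|u|+|\alpha|+|\beta|)$ for $|u|$ bounded. Writing $K_j^n(\alpha,\beta)=M+E$, where $M$ is the contribution of the constant part $\Psi\equiv1$, I would evaluate $M$ as follows. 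On the contour $\mathrm{Re}(u+\alpha),\mathrm{Re}(u+\beta)>0$, so
\[
\frac{1}{(u+\alpha)^r}=\frac{1}{(r-1)!}\int_0^\infty X^{r-1}e^{-(u+\alpha)X}\,dX,\qquad \frac{1}{(u+\beta)^r}=\frac{1}{(r-1)!}\int_0^\infty Z^{r-1}e^{-(u+\beta)Z}\,dZ;
\]
substituting these, interchanging the order of integration (valid by absolute convergence, as $(u+\alpha)^{-r}(u+\beta)^{-r}u^{-j-1}$ is integrable on the line and $e^{-(u+\alpha)X-(u+\beta)Z}$ decays in $X,Z$), and evaluating the inner $u$-integral by the Perron-type identity $\frac{1}{2\pi i}\int_{(c/\log T)}e^{uv}u^{-j-1}\,du=v^j/j!$ for $v>0$, $=0$ for $v<0$, with $v=L-X-Z$, gives
\[
M=\frac{1}{((r-1)!)^2j!}\iint\limits_{\substack{X,Z\ge0\\X+Z\le L}}X^{r-1}Z^{r-1}(L-X-Z)^j e^{-\alpha X-\beta Z}\,dX\,dZ.
\]
The substitutions $X=Lx$, $Z=Lz$, together with $e^{-\alpha Lx}=(y/n)^{-\alpha x}$, then turn this into exactly the claimed main term $\frac{(\log y/n)^{j+2r}}{((r-1)!)^2 j!}\iint_{x,z\ge0,\,x+z\le1}(y/n)^{-\alpha x-\beta z}(xz)^{r-1}(1-x-z)^j\,dx\,dz$.

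For the error, $E=\frac{1}{2\pi i}\int(y/n)^u(u+\alpha)^{-r}(u+\beta)^{-r}u^{-j-1}(\Psi(u)-1)\,du$, and I would split the contour at $|u|=\tfrac1{10}$. On the part $|u|\le\tfrac1{10}$ one has $\Psi(u)-1\ll|u|+(\log T)^{-1}$ and $|u+\alpha|,|u+\beta|\asymp|u|$, so this contributes $\ll\int_{|u|\le1/10}\big(|u|^{-2r-j}+|u|^{-2r-j-1}(\log T)^{-1}\big)\,|du|\ll(\log T)^{2r+j-1}$, the exponent being controlled by $2r+j\ge2$. On the part $|u|\ge\tfrac1{10}$ one bounds $(u+\alpha)^{-r}(u+\beta)^{-r}(\Psi(u)-1)=\zeta^r(1+\alpha+u)\zeta^r(1+\beta+u)-(u+\alpha)^{-r}(u+\beta)^{-r}\ll(\log(2+|u|))^{2r}$, so the contribution is $O(1)$; for $j=0$ this last integral is only conditionally convergent and one first truncates at height $T^\varepsilon$, the tail being negligible after one integration by parts in $u$ exploiting the oscillation of $(y/n)^u$. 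Altogether $E\ll(\log T)^{2r+j-1}$, as required.

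The algebraic content here is short once one observes that $\zeta(1+w)=\Phi(w)/w$ with $\Phi$ entire and $\approx1$: this at once reduces matters to a $\Gamma$-function/simplex computation and, importantly, keeps the delicate cancellation between the poles at $u=0$, $u=-\alpha$ and $u=-\beta$ implicit instead of forcing one to track individually singular residues. Thus the only points that really need care are the \emph{uniformity} of all error estimates in the shifts (secured by the contour adjustment and by keeping the $O$-terms explicit in $|\alpha|+|\beta|$) and, for $j=0$, the conditional convergence at the top of the contour; I expect the uniformity bookkeeping to be the main obstacle, though a routine one.
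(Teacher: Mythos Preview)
The paper does not prove this lemma; it simply cites \cite[Lemma~4.1]{BM}. Your argument is correct and self-contained for $j\ge1$: extracting the principal part $(u+\alpha)^{-r}(u+\beta)^{-r}$ of $\zeta^r(1+\alpha+u)\zeta^r(1+\beta+u)$, writing each factor via its Gamma integral, and invoking the identity $\frac{1}{2\pi i}\int_{(c)}e^{uv}u^{-j-1}\,du=\frac{v^j}{j!}\mathbf{1}_{v>0}$ yields precisely the claimed simplex integral, while the error from $\Psi(u)-1$ is bounded exactly as you indicate on both pieces of the contour.

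The case $j=0$ is a genuine gap, however, and is not repaired by the fix you propose. Two things go wrong. First, on the truncated range $\tfrac{1}{10}\le|u|\le T^\varepsilon$ the crude bound gives $\int_{1/10}^{T^\varepsilon}(\log v)^{2r}v^{-1}\,dv\asymp(\varepsilon\log T)^{2r+1}$, which already exceeds the target $(\log T)^{2r-1}$. Second, integration by parts on the tail does not restore convergence: differentiating $\zeta^r(1+\alpha+u)$ trades a factor $\zeta\ll\log|u|$ for $\zeta'\ll(\log|u|)^2$, so after any number of integrations by parts the integrand is still of order $(\log|u|)^N/|u|$ and the integral remains divergent in absolute value; the factor $L^{-1}$ gained per step does not compensate since $L\le\vartheta\log T$. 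The cleanest remedy is to treat $j=0$ separately: shifting the contour to the right and expanding $\zeta^r\zeta^r$ as a Dirichlet series identifies $K_0^n$ with $\sum_{m_1m_2\le y/n}d_r(m_1)d_r(m_2)m_1^{-1-\alpha}m_2^{-1-\beta}$, and then two applications of Lemma~\ref{6001} give the stated main term with error $O\big((\log y/n)^{2r-1}\big)\ll(\log T)^{2r-1}$.
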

\begin{proof}
See \cite[Lemma 4.1]{BM}.
\end{proof}

\section{Proof of Theorem \ref{mainthm}}

We shall need the following twisted fourth moment of the Riemann zeta-function \cite[Theorem 1.2]{BBLR}. 

\begin{theorem}[Bettin, Bui, Li and Radziwi\l\l]\label{BB}
Let $G(s)$ be an even entire function of rapid decay in any fixed strip $|\emph{Re}(s)|\leq C$ satisfying $G(0)=1$, and let
\begin{equation}\label{Vx}
V(x)=\frac{1}{2\pi i}\int_{(1)}G(s)(2\pi)^{-2s}x^{-s}\frac{ds}{s}.
\end{equation}
Then we have
\begin{align*}
&\sum_{m,n\leq x}\frac{a_m\overline{a_n}}{\sqrt{mn}}\int_{-\infty}^{\infty}\zeta(\tfrac{1}{2}+\alpha_1+it)\zeta(\tfrac{1}{2}+\alpha_2+it)\zeta(\tfrac{1}{2}+\alpha_3-it)\zeta(\tfrac{1}{2}+\alpha_4-it)\Big(\frac{m}{n}\Big)^{-it}w\Big(\frac{t}{T}\Big)dt\\
&\quad=\sum_{m,n\leq y}\frac{a_m\overline{a_n}}{\sqrt{mn}}\int_{-\infty}^{\infty}w\Big(\frac{t}{T}\Big)\bigg\{Z_{\alpha_1,\alpha_2,\alpha_3,\alpha_4,m,n}(t)+\Big(\frac{t}{2\pi}\Big)^{-(\alpha_1+\alpha_3)}Z_{-\alpha_3,\alpha_2,-\alpha_1,\alpha_4,m,n}(t)\\
&\quad\quad+\Big(\frac{t}{2\pi}\Big)^{-(\alpha_1+\alpha_4)}Z_{-\alpha_4,\alpha_2,\alpha_3,-\alpha_1,m,n}(t)+\Big(\frac{t}{2\pi}\Big)^{-(\alpha_2+\alpha_3)}Z_{\alpha_1,-\alpha_3,-\alpha_2,\alpha_4,m,n}(t)\\
&\quad\quad+\Big(\frac{t}{2\pi}\Big)^{-(\alpha_2+\alpha_4)}Z_{\alpha_1,-\alpha_4,\alpha_3,-\alpha_2,m,n}(t)+\Big(\frac{t}{2\pi}\Big)^{-(\alpha_1+\alpha_2+\alpha_3+\alpha_4)}Z_{-\alpha_3,-\alpha_4,-\alpha_1,-\alpha_2,m,n}(t)\bigg\}dt\\
&\quad\quad+O_\varepsilon(T^{1/2+\varepsilon}x^2+T^{3/4+\varepsilon}x)
\end{align*}
uniformly for $\alpha_j\ll (\log T)^{-1}$, where 
\[
Z_{\alpha_1,\alpha_2,\alpha_3,\alpha_4,m,n}(t)=\sum_{ma_1a_2=na_3a_4}\frac{1}{a_{1}^{1/2+\alpha_1}a_{2}^{1/2+\alpha_2}a_{3}^{1/2+\alpha_3}a_{4}^{1/2+\alpha_4}}V\Big(\frac{a_1a_2a_3a_4}{t^2}\Big).
\]
\end{theorem}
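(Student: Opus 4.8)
The plan is to expand the product of four zeta-values by an approximate functional equation, separate the resulting double divisor correlation into a diagonal part --- which will produce the six arithmetic terms $Z_{\bullet}$ displayed on the right-hand side --- and an off-diagonal part, and then to estimate the latter by the $\delta$-method together with bounds for sums of Kloosterman sums. This is the scheme of \cite{BBLR}, and we only sketch it here.

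First I would apply an approximate functional equation to the shifted product $\zeta(\tfrac12+\alpha_1+it)\zeta(\tfrac12+\alpha_2+it)\zeta(\tfrac12+\alpha_3-it)\zeta(\tfrac12+\alpha_4-it)$. Iterating $\zeta(s)=\chi(s)\zeta(1-s)$ produces, besides the principal Dirichlet series, five further series obtained by swapping one or both of the shifts $\alpha_1,\alpha_2$ with $-\alpha_3,-\alpha_4$ through the functional equation; these six configurations carry ratios of $\chi$-factors whose leading behaviour is exactly the powers $(t/2\pi)^{-(\alpha_i+\alpha_j)}$ and $(t/2\pi)^{-(\alpha_1+\alpha_2+\alpha_3+\alpha_4)}$ recorded in the statement. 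Using the cut-off $V$ of \eqref{Vx}, each configuration becomes a sum $\sum_{a_1a_2a_3a_4}\big(\prod_j a_j^{1/2+\ast}\big)^{-1}V(a_1a_2a_3a_4/t^2)$; inserting the twist $a_m\overline{a_n}(m/n)^{-it}/\sqrt{mn}$ and integrating against $w(t/T)$, the $t$-integral then carries the oscillatory factor $(ma_1a_2/na_3a_4)^{-it}$.

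Next I would split according to whether $ma_1a_2=na_3a_4$ or not. On the diagonal the oscillation is trivial, the $t$-integral collapses to $\int\chi(\cdots)w(t/T)V(\cdots)\,dt$, and summing over the constrained $a_j$ reproduces exactly $Z_{\alpha_1,\alpha_2,\alpha_3,\alpha_4,m,n}(t)$ for the principal configuration and the analogous $Z_{\bullet}$ for the other five; after unifying the various $V$-weights into the single $V$ of \eqref{Vx} at negligible cost, this is the main term on the right. For the off-diagonal, put $h=ma_1a_2-na_3a_4\neq0$; integration by parts in $t$ restricts the sum to $|h|$ much smaller than the typical size $\asymp T$ of either $ma_1a_2$ or $na_3a_4$, so that $h$ is a genuinely small shift. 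I would then detect the relation $ma_1a_2-na_3a_4=h$ by Heath--Brown's form of the $\delta$-method --- equivalently, reduce to a binary additive divisor sum $\sum_{\ell}d(\ell)d(\ell+h)$ with $\ell\asymp T$, uniformly in $h$ --- thereby converting the problem into averages of Kloosterman sums: the Weil bound disposes of the small-modulus range, while the Deshouillers--Iwaniec spectral large sieve for sums of Kloosterman sums (equivalently Kuznetsov's formula together with non-trivial bounds towards Ramanujan for Maass forms) extracts power cancellation in the remaining range. A secondary main term emerging from the principal frequency of the $\delta$-method is absorbed into the $Z_{\bullet}$ already present, and what remains is $O_\varepsilon(T^{1/2+\varepsilon}x^2+T^{3/4+\varepsilon}x)$.

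The hard part will be the off-diagonal analysis. Everything preceding it is bookkeeping with the functional equation and Mellin transforms, but the off-diagonal requires first rearranging the fourfold sum $\sum_{m,n}a_m\overline{a_n}\sum_{a_1a_2a_3a_4}$ into a genuine shifted-divisor (or Kloosterman-sum) shape in a way that also exploits cancellation in the $m$- and $n$-sums, so that one of the error terms ends up linear rather than quadratic in the length $x$ of the Dirichlet polynomial, and then balancing the $\delta$-method error against the spectral savings so as to obtain a \emph{power}-saving error term uniform in the shifts $\alpha_j$. Optimising this balance is exactly what produces the stated error $T^{1/2+\varepsilon}x^2+T^{3/4+\varepsilon}x$, hence the admissible range $x\leq T^{1/4-\varepsilon}$ (i.e. $\vartheta<1/4$); carrying this through in full, while tracking the dependence on $\alpha_j$ at every step, is the substance of \cite{BBLR}.
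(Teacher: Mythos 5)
The paper does not prove Theorem~\ref{BB}; it is quoted directly from Bettin--Bui--Li--Radziwi\l\l\ [BBLR, Theorem~1.2], so there is no in-paper argument to compare your sketch against. With that caveat, your outline is a fair high-level description of the Hughes--Young and BBLR strategy: expand the shifted product of four zeta-values by the ``recipe'' (approximate functional equation), split according to whether $ma_1a_2 = na_3a_4$, read off the six diagonal configurations as the $Z_\bullet$ main terms (the swaps $\alpha_1,\alpha_2 \leftrightarrow -\alpha_3,-\alpha_4$ producing the $(t/2\pi)^{-(\alpha_i+\alpha_j)}$ factors), and estimate the off-diagonal by converting it into a shifted-divisor / Kloosterman-sum problem handled with the $\delta$-method and spectral theory.

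Two points are worth sharpening. First, the BBLR improvement from Hughes--Young's $\vartheta < 1/11$ to $\vartheta < 1/4$ is not, as you frame it, a matter of ``exploiting cancellation in the $m$- and $n$-sums''; it comes from a new \emph{uniformity} in the quadratic divisor problem $\sum_{am - bn = h} d(m)d(n)\,f(am,bn)$ with respect to the coefficients $a,b$ (which here are essentially the Dirichlet-polynomial indices $m,n$, of size up to $x$), uniformly in the shift $h$. Isolating this as the decisive technical input, rather than generic cancellation in $m,n$, is what explains the particular shape of the error $O_\varepsilon(T^{1/2+\varepsilon}x^2 + T^{3/4+\varepsilon}x)$ and hence the threshold $\vartheta < 1/4$. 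Second, a small caveat: the circle-method machinery in this line of work is the Duke--Friedlander--Iwaniec $\delta$-method rather than Heath-Brown's variant, and in BBLR the Kloosterman-sum input enters via Kuznetsov/Deshouillers--Iwaniec-type spectral estimates applied to the resulting quadratic divisor sum, tracked uniformly in $a,b,h$. These are the ingredients one must actually carry through to recover the stated error term; as a sketch, though, your proposal correctly identifies the architecture of the proof.
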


Using Theorem \ref{BB} we can write 
\begin{align*}
I(\underline{\alpha},\underline{\beta})=I_1+I_2+I_3+I_4+I_5+I_6+O_\varepsilon(T^{1/2+4\vartheta+\varepsilon}+T^{3/4+2\vartheta+\varepsilon}).
\end{align*}
In order to simplify the calculations later, we shall choose $G(s)$ to have zeros at $s=-(\alpha_j+\alpha_k)/2$ for  $1\leq j\leq2$ and $3\leq k\leq4$. 

We first consider $I_1$, which is
\begin{align*}
I_1=&\sum_{m_1,m_2,m_3,m_4\leq y} \frac{d_r(m_1)d_r(m_2)d_r(m_3)d_r(m_4)P(\frac{\log y/m_1}{\log{y}})P(\frac{\log y/m_2}{\log{y}})P(\frac{\log y/m_3}{\log{y}})P(\frac{\log y/m_4}{\log{y}})}{m_1^{1/2+\beta_1}m_2^{1/2+\beta_2}m_3^{1/2+\beta_3}m_4^{1/2+\beta_4}}\nonumber\\
&\qquad\times\sum_{\substack{m_1m_2a_1a_2\\=m_3m_4a_3a_4}}\frac{1}{a_1^{1/2+\alpha_1}a_2^{1/2+\alpha_2}a_3^{1/2+\alpha_3}a_4^{1/2+\alpha_4}}\int_{-\infty}^{\infty}\omega\Big(\frac{t}{T}\Big)V\left(\frac{a_1a_2a_3a_4}{t^2}\right)\,dt.
\end{align*}
Note that
\[
P\Big(\frac{\log y/n}{\log{y}}\Big)=\sum_{j}\frac{c_jj!}{(\log y)^j}\frac{1}{2\pi i}\int_{(1)}\Big(\frac yn\Big)^u\frac{du}{u^{j+1}}.
\]
Together with \eqref{Vx} we then have that
\begin{align}\label{I1}
I_1=&\sum_{j_1,j_2,j_3,j_4}\frac{c_{j_1}c_{j_2}c_{j_3}c_{j_4}j_1!j_2!j_3!j_4!}{(\log y)^{j_1+j_2+j_3+j_4}}\Big(\frac{1}{2\pi i}\Big)^5\int_{-\infty}^{\infty} \idotsint_{(1)^5}\omega\Big(\frac{t}{T}\Big)G(s)\Big(\frac{t}{2\pi}\Big)^{2s}y^{u_1+u_2+u_3+u_4}\nonumber\\
&\qquad\times\sum_{\substack{m_1m_2a_1a_2\\=m_3m_4a_3a_4}}\frac{d_r(m_1)d_r(m_2)d_r(m_3)d_r(m_4)}{m_1^{1/2+\beta_1+u_1}m_2^{1/2+\beta_2+u_2}m_3^{1/2+\beta_3+u_3}m_4^{1/2+\beta_4+u_4}}\\
&\qquad\qquad\times\frac{1}{a_1^{1/2+\alpha_1+s}a_2^{1/2+\alpha_2+s}a_3^{1/2+\alpha_3+s}a_4^{1/2+\alpha_4+s}}\frac{du_1}{u_1^{j_1+1}}\frac{du_2}{u_2^{j_2+1}}\frac{du_3}{u_3^{j_3+1}}\frac{du_4}{u_4^{j_4+1}}\frac{ds}{s}dt.\nonumber
\end{align} Using the Euler product formula, the innermost sum is equal to
\begin{align}\label{I355}
&A(\underline{\alpha},\underline{\beta},\underline{u},s)\zeta(1+\alpha_1+\alpha_3+2s)\zeta(1+\alpha_1+\alpha_4+2s)\zeta(1+\alpha_2+\alpha_3+2s)\nonumber\\
&\quad\times\zeta(1+\alpha_2+\alpha_4+2s) \zeta^{r^2}(1+\beta_1+\beta_3+u_1+u_3)\zeta^{r^2}(1+\beta_1+\beta_4+u_1+u_4)\nonumber\\
    &\quad\times\zeta^{r^2}(1+\beta_2+\beta_3+u_2+u_3)\zeta^{r^2}(1+\beta_2+\beta_4+u_2+u_4)\ze{r}{1}{3}{u_3+s}\nonumber\\
    &\quad\times\ze{r}{1}{4}{u_4+s}\ze{r}{2}{3}{u_3+s}\ze{r}{2}{4}{u_4+s}\\
    &\quad\times\ze{r}{3}{1}{u_1+s}\ze{r}{3}{2}{u_2+s}  \ze{r}{4}{1}{u_1+s}\nonumber\\
    &\quad\times \ze{r}{4}{2}{u_2+s},\nonumber
\end{align}
where $A(\underline{\alpha},\underline{\beta},\underline{u},s)$ is an Euler product converging absolutely in a product of half-planes containing the origin.

We first move the $u_j$ contours, $1\leq j\leq 4$, in \eqref{I1} to $\textrm{Re}(u_j)=\delta$, and then move the $s$ contour to $\textrm{Re}(s)=-\delta/2$, where
$\delta> 0$ is some fixed constant such that the arithmetical factor converges absolutely. In doing so, we only cross a simple pole at $s=0$. Note that the poles at $s=-(\alpha_j+\alpha_k)/2$, for every $1\leq j\leq2$ and $3\leq k\leq4$, of the zeta-functions are cancelled out by the zeros of $G(s)$. On the new line we simply bound the integral by absolute values, giving a contribution
\[
\ll_\varepsilon T^{1-\delta+\varepsilon}y^{4\delta}\ll_\varepsilon T^{1-\varepsilon}.
\]
Hence up to an error of size $O_\varepsilon (T^{1-\varepsilon})$ we have
\begin{align*}
&I_1=\widehat{\omega}(0)T\zeta(1+\alpha_1+\alpha_3)\zeta(1+\alpha_1+\alpha_4)\zeta(1+\alpha_2+\alpha_3)\zeta(1+\alpha_2+\alpha_4)\\
&\quad\times\sum_{j_1,j_2,j_3,j_4}\frac{c_{j_1}c_{j_2}c_{j_3}c_{j_4}j_1!j_2!j_3!j_4!}{(\log y)^{j_1+j_2+j_3+j_4}}\Big(\frac{1}{2\pi i}\Big)^4\iiiint_{(\delta)^4}y^{u_1+u_2+u_3+u_4}A(\underline{\alpha},\underline{\beta},\underline{u},0)\\
&\quad\times \zeta^{r^2}(1+\beta_1+\beta_3+u_1+u_3)\zeta^{r^2}(1+\beta_1+\beta_4+u_1+u_4)\zeta^{r^2}(1+\beta_2+\beta_3+u_2+u_3)\\
    &\quad\times\zeta^{r^2}(1+\beta_2+\beta_4+u_2+u_4)\ze{r}{1}{3}{u_3}\ze{r}{1}{4}{u_4}\\
    &\quad\times\ze{r}{2}{3}{u_3}\ze{r}{2}{4}{u_4}\ze{r}{3}{1}{u_1}\\
    &\quad\times\ze{r}{3}{2}{u_2}  \ze{r}{4}{1}{u_1}\ze{r}{4}{2}{u_2}\frac{du_1}{u_1^{j_1+1}}\frac{du_2}{u_2^{j_2+1}}\frac{du_3}{u_3^{j_3+1}}\frac{du_4}{u_4^{j_4+1}}.\nonumber
\end{align*}

Next we move the contours of integration to $\textrm{Re}(u_j)\asymp (\log T)^{-1}$, $1\leq j\leq 4$. Bounding the integrals trivially shows that $I_1\ll T(\log T)^{4(r+1)^2}$. Hence we can replace $A(\underline{\alpha},\underline{\beta},\underline{u},0)$ by $A(\underline{0})$ with an error of size $O(T(\log T)^{4(r+1)^2-1})$. By letting $\alpha_j=\beta_j=0$ and $u_j=s$, $1\leq j\leq 4$, in \eqref{I355} we get
\begin{align*}
A(\underline{0},s,s,s,s,s)&=\zeta(1+2s)^{-4(r+1)^2}\sum_{{m_1m_2a_1a_2=m_3m_4a_3a_4}}\frac{d_r(m_1)d_r(m_2)d_r(m_3)d_r(m_4)}{(m_1m_2m_3m_4a_1a_2a_3a_4)^{1/2+s}}\\
&=\zeta(1+2s)^{-4(r+1)^2}\sum_{n=1}^{\infty}\frac{d_{2r+2}(n)^2}{n^{1+2s}}\\
&=\prod_{p}\bigg(\Big(1-\frac{1}{p^{1+2s}}\Big)^{4(r+1)^2}\sum_{j=0}^\infty\frac{d_{2r+2}(p^j)^2}{p^{j(1+2s)}}\bigg),
\end{align*}
and so $A(\underline{0})=a_{2r+2}$, in particular. Thus
\begin{align}\label{I1f}
I_1&=\widehat{\omega}(0)a_{2r+2}T\zeta(1+\alpha_1+\alpha_3)\zeta(1+\alpha_1+\alpha_4)\zeta(1+\alpha_2+\alpha_3)\zeta(1+\alpha_2+\alpha_4)\nonumber\\
&\qquad\times\sum_{j_1,j_2,j_3,j_4}\frac{c_{j_1}c_{j_2}c_{j_3}c_{j_4}j_1!j_2!j_3!j_4!}{(\log y)^{j_1+j_2+j_3+j_4}}\,J_{j_1,j_2,j_3,j_4}+O\big(T(\log T)^{4(r+1)^2-1}\big),
\end{align}
where $J_{j_1,j_2,j_3,j_4}$ is given by
\begin{align*}
&\Big(\frac{1}{2\pi i}\Big)^4\iiiint_{( (\log T)^{-1})^4}y^{u_1+u_2+u_3+u_4}\zeta^{r^2}(1+\beta_1+\beta_3+u_1+u_3)\zeta^{r^2}(1+\beta_1+\beta_4+u_1+u_4)\\
    &\quad\times\zeta^{r^2}(1+\beta_2+\beta_3+u_2+u_3)\zeta^{r^2}(1+\beta_2+\beta_4+u_2+u_4)\ze{r}{1}{3}{u_3}\\
    &\quad\times\ze{r}{1}{4}{u_4}\ze{r}{2}{3}{u_3}\ze{r}{2}{4}{u_4}\ze{r}{3}{1}{u_1}\\
    &\quad\times\ze{r}{3}{2}{u_2}  \ze{r}{4}{1}{u_1}\ze{r}{4}{2}{u_2}\frac{du_1}{u_1^{j_1+1}}\frac{du_2}{u_2^{j_2+1}}\frac{du_3}{u_3^{j_3+1}}\frac{du_4}{u_4^{j_4+1}}.
\end{align*}

We express the $\zeta^{r^2}$ terms as absolutely convergent Dirichlet series, getting
\begin{align*}
J_{j_1,j_2,j_3,j_4}=&\sum_{\substack{n_1n_2,n_3n_4\leq y\\n_1n_3,n_2n_4\leq y}}\frac{d_{r^2}(n_1)d_{r^2}(n_2)d_{r^2}(n_3)d_{r^2}(n_4)}{n_1^{1+\beta_1+\beta_3}n_2^{1+\beta_1+\beta_4}n_3^{1+\beta_2+\beta_3}n_4^{1+\beta_2+\beta_4}}\\
&\quad\times\frac{1}{2\pi i}\int_{( (\log T)^{-1})}\Big(\frac{y}{n_1n_3}\Big)^{u_3}\zeta ^r(1+\alpha_1+\beta_3+u_3)\zeta ^r(1+\alpha_2+\beta_3+u_3)\frac{du_3}{u_3^{j_3+1}}\\
&\quad\times\frac{1}{2\pi i}\int_{( (\log T)^{-1})}\Big(\frac{y}{n_2n_4}\Big)^{u_4}\zeta ^r(1+\alpha_1+\beta_4+u_4)\zeta ^r(1+\alpha_2+\beta_4+u_4)\frac{du_4}{u_4^{j_4+1}}\\
&\quad\times\frac{1}{2\pi i}\int_{( (\log T)^{-1})}\Big(\frac{y}{n_1n_2}\Big)^{u_1}\zeta ^r(1+\alpha_3+\beta_1+u_1)\zeta ^r(1+\alpha_4+\beta_1+u_1)\frac{du_1}{u_1^{j_1+1}}\\
&\quad\times\frac{1}{2\pi i}\int_{( (\log T)^{-1})}\Big(\frac{y}{n_3n_4}\Big)^{u_2}\zeta ^r(1+\alpha_3+\beta_2+u_2)\zeta ^r(1+\alpha_4+\beta_2+u_2)\frac{du_2}{u_2^{j_2+1}}\\
=&\sum_{n_1,n_2,n_3,n_4\leq y}\frac{d_{r^2}(n_1)d_{r^2}(n_2)d_{r^2}(n_3)d_{r^2}(n_4)}{n_1^{1+\beta_1+\beta_3}n_2^{1+\beta_1+\beta_4}n_3^{1+\beta_2+\beta_3}n_4^{1+\beta_2+\beta_4}}K_{j_3}^{n_1n_3}(\alpha_1+\beta_3,\alpha_2+\beta_3)\\
&\quad\times K_{j_4}^{n_2n_4}(\alpha_1+\beta_4,\alpha_2+\beta_4) K_{j_1}^{n_1n_2}(\alpha_3+\beta_1,\alpha_4+\beta_1)K_{j_2}^{n_3n_4}(\alpha_3+\beta_2,\alpha_4+\beta_2),
\end{align*}
where $K_j^n(\alpha,\beta)$  is defined in Lemma \ref{600}. Note that here we are able to restrict the sum over $n_j$ to $n_1n_2,n_3n_4,n_1n_3,n_2n_4\leq y$ by moving the line integrals far to the right. From Lemma \ref{600} we have
\begin{align*}
&J_{j_1,j_2,j_3,j_4}=\frac{(\log y)^{j_1+j_2+j_3+j_4}}{((r-1)!)^8j_1!j_2!j_3!j_4!}\idotsint\limits_{\substack{0\leq x_j,z_j\leq 1\\x_1+z_1,x_2+z_2\leq 1\\x_3+z_3,x_4+z_4\leq1}}\\
&\quad\times (x_1x_2x_3x_4z_1z_2z_3z_4)^{r-1}(1-x_1-z_1)^{j_3}(1-x_2-z_2)^{j_4}(1-x_3-z_3)^{j_1}(1-x_4-z_4)^{j_2}\\
&\quad\times\sum_{\substack{n_1n_2,n_3n_4\leq y\\n_1n_3,n_2n_4\leq y}}\frac{d_{r^2}(n_1)d_{r^2}(n_2)d_{r^2}(n_3)d_{r^2}(n_4)}{n_1^{1+\beta_1+\beta_3}n_2^{1+\beta_1+\beta_4}n_3^{1+\beta_2+\beta_3}n_4^{1+\beta_2+\beta_4}}\\
&\quad\times\Big(\frac{\log y/n_1n_2}{\log y}\Big)^{j_1+2r}\Big(\frac{\log y/n_3n_4}{\log y}\Big)^{j_2+2r}\Big(\frac{\log y/n_1n_3}{\log y}\Big)^{j_3+2r}\Big(\frac{\log y/n_2n_4}{\log y}\Big)^{j_4+2r}\\
&\quad\times\Big(\frac{y}{n_1n_2}\Big)^{-(\alpha_3+\beta_1)x_3-(\alpha_4+\beta_1)z_3}\Big(\frac{y}{n_3n_4}\Big)^{-(\alpha_3+\beta_2)x_4-(\alpha_4+\beta_2)z_4}\Big(\frac{y}{n_1n_3}\Big)^{-(\alpha_1+\beta_3)x_1-(\alpha_2+\beta_3)z_1}\\
&\quad\times\Big(\frac{y}{n_2n_4}\Big)^{-(\alpha_1+\beta_4)x_2-(\alpha_2+\beta_4)z_2}
dx_1dx_2dx_3dx_4dz_1dz_2dz_3dz_4.
\end{align*}
Using Lemma \ref{EM} we deduce that
\begin{align*}
&J_{j_1,j_2,j_3,j_4}=\frac{(\log y)^{j_1+j_2+j_3+j_4+4r^2+8r}}{((r-1)!)^8((r^2-1)!)^4j_1!j_2!j_3!j_4!}\ \idotsint\limits_{\substack{0\leq t_j\leq 1\\t_1+t_2,t_3+t_4\leq1\\t_1+t_3,t_2+t_4\leq1 }}\quad\idotsint\limits_{\substack{0\leq x_j,z_j\leq 1\\x_1+z_1,x_2+z_2\leq 1\\x_3+z_3,x_4+z_4\leq1}}\\
&\quad\times y^{-(\alpha_1+\beta_3)x_1-(\alpha_2+\beta_3)z_1-(\alpha_1+\beta_4)x_2-(\alpha_2+\beta_4)z_2-(\alpha_3+\beta_1)x_3-(\alpha_4+\beta_1)z_3-(\alpha_3+\beta_2)x_4-(\alpha_4+\beta_2)z_4}\\
&\quad\times y^{-((\beta_1+\beta_3)-(\alpha_1+\beta_3)x_1-(\alpha_2+\beta_3)z_1-(\alpha_3+\beta_1)x_3-(\alpha_4+\beta_1)z_3)t_1}\\
&\quad\times y^{-((\beta_1+\beta_4)-(\alpha_1+\beta_4)x_2-(\alpha_2+\beta_4)z_2-(\alpha_3+\beta_1)x_3-(\alpha_4+\beta_1)z_3)t_2}\\
&\quad\times y^{-((\beta_2+\beta_3)-(\alpha_1+\beta_3)x_1-(\alpha_2+\beta_3)z_1-(\alpha_3+\beta_2)x_4-(\alpha_4+\beta_2)z_4)t_3}\\
&\quad\times y^{-((\beta_2+\beta_4)-(\alpha_1+\beta_4)x_2-(\alpha_2+\beta_4)z_2-(\alpha_3+\beta_2)x_4-(\alpha_4+\beta_2)z_4)t_4}\\
&\quad\times (x_1x_2x_3x_4z_1z_2z_3z_4)^{r-1}(1-x_1-z_1)^{j_3}(1-x_2-z_2)^{j_4}(1-x_3-z_3)^{j_1}(1-x_4-z_4)^{j_2}\\
&\quad\times (t_1t_2t_3t_4)^{r^2-1}(1-t_1-t_2)^{j_1+2r}(1-t_3-t_4)^{j_2+2r}(1-t_1-t_3)^{j_3+2r}(1-t_2-t_4)^{j_4+2r}\\
&\quad\times dx_1dx_2dx_3dx_4dz_1dz_2dz_3dz_4dt_1dt_2dt_3dt_4.
\end{align*}
With the change of variables $x_1(1-t_1-t_3)\rightarrow x_1$, $z_1(1-t_1-t_3)\rightarrow z_1$, $x_2(1-t_2-t_4)\rightarrow x_2$, $z_2(1-t_2-t_4)\rightarrow z_2$, $x_3(1-t_1-t_2)\rightarrow x_3$, $z_3(1-t_1-t_2)\rightarrow z_3$, $x_4(1-t_3-t_4)\rightarrow x_4$, $z_4(1-t_3-t_4)\rightarrow z_4$  we obtain that
\begin{align*}
&J_{j_1,j_2,j_3,j_4}=\frac{(\log y)^{j_1+j_2+j_3+j_4+4r^2+8r}}{((r-1)!)^8((r^2-1)!)^4j_1!j_2!j_3!j_4!}\ \idotsint\limits_{\substack{0\leq t_j,x_j,z_j\leq 1\\t_1+t_2+x_3+z_3\leq1\\t_3+t_4+x_4+z_4\leq1\\t_1+t_3+x_1+z_1\leq 1\\t_2+t_4+x_2+z_2\leq 1 }}\\
&\qquad\times y^{-(\beta_1+\beta_3)t_1-(\beta_1+\beta_4)t_2-(\beta_2+\beta_3)t_3-(\beta_2+\beta_4)t_4}\\
&\qquad\times y^{-(\alpha_1+\beta_3)x_1-(\alpha_2+\beta_3)z_1-(\alpha_1+\beta_4)x_2-(\alpha_2+\beta_4)z_2-(\alpha_3+\beta_1)x_3-(\alpha_4+\beta_1)z_3-(\alpha_3+\beta_2)x_4-(\alpha_4+\beta_2)z_4}\\
&\qquad\times (x_1x_2x_3x_4z_1z_2z_3z_4)^{r-1}(t_1t_2t_3t_4)^{r^2-1}(1-t_1-t_2-x_3-z_3)^{j_1}\\
&\qquad\times (1-t_3-t_4-x_4-z_4)^{j_2}(1-t_1-t_3-x_1-z_1)^{j_3}(1-t_2-t_4-x_2-z_2)^{j_4}\\
&\qquad\times dx_1dx_2dx_3dx_4dz_1dz_2dz_3dz_4dt_1dt_2dt_3dt_4.
\end{align*}
So from \eqref{I1f} we get
\begin{align*}
&I_1=\frac{\hat{\omega}(0)a_{2r+2}T(\log y)^{4r^2+8r}}{((r-1)!)^8((r^2-1)!)^4}\zeta(1+\alpha_1+\alpha_3)\zeta(1+\alpha_1+\alpha_4)\zeta(1+\alpha_2+\alpha_3)\zeta(1+\alpha_2+\alpha_4)\\
&\qquad\times\ \idotsint\limits_{\substack{0\leq t_j,x_j,z_j\leq 1\\t_1+t_2+x_3+z_3\leq1\\t_3+t_4+x_4+z_4\leq1\\t_1+t_3+x_1+z_1\leq 1\\t_2+t_4+x_2+z_2\leq 1 }}y^{-\alpha_1(x_1+x_2)-\alpha_2(z_1+z_2)-\alpha_3(x_3+x_4)-\alpha_4(z_3+z_4)}\\
&\qquad\times y^{-\beta_1(t_1+t_2+x_3+z_3)-\beta_2(t_3+t_4+x_4+z_4)-\beta_3(t_1+t_3+x_1+z_1)-\beta_4(t_2+t_4+x_2+z_2)}\\
&\qquad\times(x_1x_2x_3x_4z_1z_2z_3z_4)^{r-1}(t_1t_2t_3t_4)^{r^2-1}P(1-t_1-t_2-x_3-z_3)\\
&\qquad\times P(1-t_3-t_4-x_4-z_4)P(1-t_1-t_3-x_1-z_1)P(1-t_2-t_4-x_2-z_2)\\
&\qquad\times dx_1dx_2dx_3dx_4dz_1dz_2dz_3dz_4dt_1dt_2dt_3dt_4+O\big(T(\log T)^{4(r+1)^2-1}\big).
\end{align*}

Note that $I_2$ is obtained by multiplying $I_1$ by $T^{-(\alpha_1+\alpha_3)}$ and changing the shifts $\alpha_1\longleftrightarrow-\alpha_3$, $I_3$ is obtained by multiplying $I_1$ by $T^{-(\alpha_1+\alpha_4)}$ and changing the shifts $\alpha_1\longleftrightarrow-\alpha_4$, $I_4$ is obtained by multiplying $I_1$ by $T^{-(\alpha_2+\alpha_3)}$ and changing the shifts $\alpha_2\longleftrightarrow-\alpha_3$, $I_5$ is obtained by multiplying $I_1$ by $T^{-(\alpha_2+\alpha_4)}$ and changing the shifts $\alpha_2\longleftrightarrow-\alpha_4$, and $I_6$ is obtained by multiplying $I_1$ by $T^{-(\alpha_1+\alpha_2+\alpha_3+\alpha_4)}$ and changing the shifts $\alpha_1\longleftrightarrow-\alpha_3$ and $\alpha_2\longleftrightarrow-\alpha_4$. Hence
\begin{align}\label{Uformula}
&I(\underline{\alpha},\underline{\beta})=\frac{\hat{\omega}(0)a_{2r+2}T(\log y)^{4r^2+8r}}{((r-1)!)^8((r^2-1)!)^4}\ \idotsint\limits_{\substack{0\leq t_j,x_j,z_j\leq 1\\t_1+t_2+x_3+z_3\leq1\\t_3+t_4+x_4+z_4\leq1\\t_1+t_3+x_1+z_1\leq 1\\t_2+t_4+x_2+z_2\leq 1 }}U(\underline{x},\underline{z})\nonumber\\
&\quad\times y^{-\beta_1(t_1+t_2+x_3+z_3)-\beta_2(t_3+t_4+x_4+z_4)-\beta_3(t_1+t_3+x_1+z_1)-\beta_4(t_2+t_4+x_2+z_2)}\nonumber\\
&\quad\times (x_1x_2x_3x_4z_1z_2z_3z_4)^{r-1}(t_1t_2t_3t_4)^{r^2-1}P(1-t_1-t_2-x_3-z_3)\\
&\quad\times P(1-t_3-t_4-x_4-z_4)P(1-t_1-t_3-x_1-z_1)P(1-t_2-t_4-x_2-z_2)\nonumber\\
&\quad\times dx_1dx_2dx_3dx_4dz_1dz_2dz_3dz_4dt_1dt_2dt_3dt_4+O\big(T(\log T)^{4(r+1)^2-1}\big),\nonumber
\end{align}
where
\begin{align*}
U(\underline{x},\underline{z})&=\frac{y^{-\alpha_1(x_1+x_2)-\alpha_2(z_1+z_2)-\alpha_3(x_3+x_4)-\alpha_4(z_3+z_4)}}{(\alpha_1+\alpha_3)(\alpha_1+\alpha_4)(\alpha_2+\alpha_3)(\alpha_2+\alpha_4)}
\\
&\qquad\qquad-\frac{T^{-(\alpha_1+\alpha_3)}y^{\alpha_3(x_1+x_2)-\alpha_2(z_1+z_2)+\alpha_1(x_3+x_4)-\alpha_4(z_3+z_4)}}{(\alpha_1+\alpha_3)(-\alpha_3+\alpha_4)(\alpha_2-\alpha_1)(\alpha_2+\alpha_4)}\\
&\qquad\qquad-\frac{T^{-(\alpha_1+\alpha_4)}y^{\alpha_4(x_1+x_2)-\alpha_2(z_1+z_2)-\alpha_3(x_3+x_4)+\alpha_1(z_3+z_4)}}{(-\alpha_4+\alpha_3)(\alpha_1+\alpha_4)(\alpha_2+\alpha_3)(\alpha_2-\alpha_1)}\\
&\qquad\qquad
-\frac{T^{-(\alpha_2+\alpha_3)}y^{-\alpha_1(x_1+x_2)+\alpha_3(z_1+z_2)+\alpha_2(x_3+x_4)-\alpha_4(z_3+z_4)}}{(\alpha_1-\alpha_2)(\alpha_1+\alpha_4)(\alpha_2+\alpha_3)(-\alpha_3+\alpha_4)}\\
&\qquad\qquad-\frac{T^{-(\alpha_2+\alpha_4)}y^{-\alpha_1(x_1+x_2)+\alpha_4(z_1+z_2)-\alpha_3(x_3+x_4)+\alpha_2(z_3+z_4)}}{(\alpha_1+\alpha_3)(\alpha_1-\alpha_2)(-\alpha_4+\alpha_3)(\alpha_2+\alpha_4)}\\
&\qquad\qquad+\frac{T^{-(\alpha_1+\alpha_2+\alpha_3+\alpha_4)}y^{\alpha_3(x_1+x_2)+\alpha_4(z_1+z_2)+\alpha_1(x_3+x_4)+\alpha_2(z_3+z_4)}}{(\alpha_1+\alpha_3)(\alpha_3+\alpha_2)(\alpha_4+\alpha_1)(\alpha_2+\alpha_4)}.
\end{align*}

We proceed like in \cite[p.2317--2319]{BH}. Using the identity
\begin{eqnarray*}
\frac{1}{(\alpha_1+\alpha_3)(\alpha_1+\alpha_4)(\alpha_2+\alpha_3)(\alpha_2+\alpha_4)}&=&\frac{1}{(\alpha_1-\alpha_2)(\alpha_3-\alpha_4)(\alpha_1+\alpha_3)(\alpha_2+\alpha_4)}\\
&&\quad-\frac{1}{(\alpha_1-\alpha_2)(\alpha_3-\alpha_4)(\alpha_1+\alpha_4)(\alpha_2+\alpha_3)}
\end{eqnarray*}
we may write the first and the last terms as
\begin{eqnarray*}
\frac{y^{-\alpha_1(x_1+x_2)-\alpha_2(z_1+z_2)-\alpha_3(x_3+x_4)-\alpha_4(z_3+z_4)}}{(\alpha_1+\alpha_3)(\alpha_1+\alpha_4)(\alpha_2+\alpha_3)(\alpha_2+\alpha_4)}&=&\frac{y^{-\alpha_1(x_1+x_2)-\alpha_2(z_1+z_2)-\alpha_3(x_3+x_4)-\alpha_4(z_3+z_4)}}{(\alpha_1-\alpha_2)(\alpha_3-\alpha_4)(\alpha_1+\alpha_3)(\alpha_2+\alpha_4)}\\
&&\quad-\frac{y^{-\alpha_1(x_1+x_2)-\alpha_2(z_1+z_2)-\alpha_3(x_3+x_4)-\alpha_4(z_3+z_4)}}{(\alpha_1-\alpha_2)(\alpha_3-\alpha_4)(\alpha_1+\alpha_4)(\alpha_2+\alpha_3)}
\end{eqnarray*}
and
\begin{align}\label{swap1}
&\frac{T^{-(\alpha_1+\alpha_2+\alpha_3+\alpha_4)}y^{\alpha_3(x_1+x_2)+\alpha_4(z_1+z_2)+\alpha_1(x_3+x_4)+\alpha_2(z_3+z_4)}}{(\alpha_1+\alpha_3)(\alpha_3+\alpha_2)(\alpha_4+\alpha_1)(\alpha_2+\alpha_4)}\nonumber\\
&\qquad\quad=\frac{T^{-(\alpha_1+\alpha_2+\alpha_3+\alpha_4)}y^{\alpha_3(x_1+x_2)+\alpha_4(z_1+z_2)+\alpha_1(x_3+x_4)+\alpha_2(z_3+z_4)}}{(\alpha_1-\alpha_2)(\alpha_3-\alpha_4)(\alpha_1+\alpha_3)(\alpha_2+\alpha_4)}\nonumber\\
&\qquad\qquad\qquad-\frac{T^{-(\alpha_1+\alpha_2+\alpha_3+\alpha_4)}y^{\alpha_3(x_1+x_2)+\alpha_4(z_1+z_2)+\alpha_1(x_3+x_4)+\alpha_2(z_3+z_4)}}{(\alpha_1-\alpha_2)(\alpha_3-\alpha_4)(\alpha_1+\alpha_4)(\alpha_2+\alpha_3)}.
\end{align}
Notice from Lemma \ref{600} that we can change the roles $x_j\longleftrightarrow z_j$ in any term of $U(\underline{x},\underline{z})$ without affecting the value of $I(\underline{\alpha},\underline{\beta})$ in \eqref{Uformula}. So by applying all these changes to the last term on the right hand of \eqref{swap1}, we may replace $U(\underline{x},\underline{z})$ with
\begin{align}\label{combin}
&\frac{y^{-\alpha_1(x_1+x_2)-\alpha_2(z_1+z_2)-\alpha_3(x_3+x_4)-\alpha_4(z_3+z_4)}}{(\alpha_1-\alpha_2)(\alpha_3-\alpha_4)}\bigg(\frac{1-(Ty^{-\sum x_j})^{-(\alpha_1+\alpha_3)}}{\alpha_1+\alpha_3}\bigg)\nonumber\\
&\qquad\qquad\times\bigg(\frac{1-(Ty^{-\sum z_j})^{-(\alpha_2+\alpha_4)}}{\alpha_2+\alpha_4}\bigg)\nonumber\\
&\ -\frac{y^{-\alpha_1(x_1+x_2)-\alpha_2(z_1+z_2)-\alpha_3(x_3+x_4)-\alpha_4(z_3+z_4)}}{(\alpha_1-\alpha_2)(\alpha_3-\alpha_4)}\bigg(\frac{1-(Ty^{-(x_1+x_2+z_3+z_4)})^{-(\alpha_1+\alpha_4)}}{\alpha_1+\alpha_4}\bigg)\\
&\qquad\qquad\times\bigg(\frac{1-(Ty^{-(z_1+z_2+x_3+x_4)})^{-(\alpha_2+\alpha_3)}}{\alpha_2+\alpha_3}\bigg).\nonumber
\end{align}
From the integral formula
\begin{equation}\label{int}
\frac{1-y^{-(\alpha+\beta)}}{\alpha+\beta}=(\log y)\int_{0}^{1}y^{-(\alpha+\beta)v}dv
\end{equation} we obtain that
\begin{align}\label{finalformulaI}
&I(\underline{\alpha},\underline{\beta})=\frac{\hat{\omega}(0)a_{2r+2}T(\log y)^{4r^2+8r}(\log T)^2}{((r-1)!)^8((r^2-1)!)^4}\ \idotsint\limits_{\substack{0\leq v_j,t_j,x_j,z_j\leq 1\\t_1+t_2+x_3+z_3\leq1\\t_3+t_4+x_4+z_4\leq1\\t_1+t_3+x_1+z_1\leq 1\\t_2+t_4+x_2+z_2\leq 1 }}\frac{U_1(\underline{x},\underline{z},v_1,v_2)-U_2(\underline{x},\underline{z},v_1,v_2)}{(\alpha_1-\alpha_2)(\alpha_3-\alpha_4)}\nonumber\\
&\qquad\times y^{-\beta_1(t_1+t_2+x_3+z_3)-\beta_2(t_3+t_4+x_4+z_4)-\beta_3(t_1+t_3+x_1+z_1)-\beta_4(t_2+t_4+x_2+z_2)}\nonumber\\
&\qquad\times (x_1x_2x_3x_4z_1z_2z_3z_4)^{r-1}(t_1t_2t_3t_4)^{r^2-1}P(1-t_1-t_2-x_3-z_3)\\
&\qquad\times P(1-t_3-t_4-x_4-z_4)P(1-t_1-t_3-x_1-z_1)P(1-t_2-t_4-x_2-z_2)\nonumber\\
&\qquad\times dx_1dx_2dx_3dx_4dz_1dz_2dz_3dz_4dt_1dt_2dt_3dt_4dv_1dv_2+O\big(T(\log T)^{4(r+1)^2-1}\big),\nonumber
\end{align}
where
\begin{align*}
U_1&=y^{-\alpha_1(x_1+x_2)-\alpha_2(z_1+z_2)-\alpha_3(x_3+x_4)-\alpha_4(z_3+z_4)}(Ty^{-\sum x_j})^{-(\alpha_1+\alpha_3)v_1}(Ty^{-\sum z_j})^{-(\alpha_2+\alpha_4)v_2}\\
&\qquad\qquad\times\big(1-\vartheta\sum x_j\big)\big(1-\vartheta\sum z_j\big)
\end{align*}
and
\begin{align*}
&U_2= y^{-\alpha_1(x_1+x_2)-\alpha_2(z_1+z_2)-\alpha_3(x_3+x_4)-\alpha_4(z_3+z_4)}(Ty^{-(x_1+x_2+z_3+z_4)})^{-(\alpha_1+\alpha_4)v_1}\\
&\ \quad\times(Ty^{-(z_1+z_2+x_3+x_4)})^{-(\alpha_2+\alpha_3)v_2}\big(1-\vartheta(x_1+x_2+z_3+z_4)\big)\big(1-\vartheta(z_1+z_2+x_3+x_4)\big).
\end{align*}

Again note that $I(\underline{\alpha},\underline{\beta})$ is unchanged if we swap any of the pairs of variables $x_j\longleftrightarrow z_j$ for any $1\leq j\leq 4$ or $v_1\longleftrightarrow v_2$ in $U_1$ or $U_2$. Hence we can replace the term $U_1-U_2$ in the integrand with
\begin{eqnarray*}
&&\frac{1}{2}\Big(U_1(\underline{x},\underline{z},v_1,v_2)-U_2(z_1,z_2,x_3,x_4,x_1,x_2,z_3,z_4,v_2,v_1)\\
&&\qquad\qquad-U_2(x_1,x_2,z_3,z_4,z_1,z_2,x_3,x_4,v_1,v_2)+U_1(\underline{z},\underline{x},v_2,v_1)\Big),
\end{eqnarray*}
which is
\begin{align*}
&\frac{\big(1-\vartheta\sum x_j\big)\big(1-\vartheta\sum z_j\big)}{2}\nonumber\\
&\quad\times\bigg(y^{-\alpha_1(x_1+x_2)-\alpha_2(z_1+z_2)-\alpha_3(x_3+x_4)-\alpha_4(z_3+z_4)}(Ty^{-\sum x_j})^{-(\alpha_1+\alpha_3)v_1}(Ty^{-\sum z_j})^{-(\alpha_2+\alpha_4)v_2}\nonumber\\
&\qquad\ \  -y^{-\alpha_1(z_1+z_2)-\alpha_2(x_1+x_2)-\alpha_3(x_3+x_4)-\alpha_4(z_3+z_4)}(Ty^{-\sum z_j})^{-(\alpha_1+\alpha_4)v_2}(Ty^{-\sum x_j})^{-(\alpha_2+\alpha_3)v_1}\nonumber\\
&\qquad\ \ - y^{-\alpha_1(x_1+x_2)-\alpha_2(z_1+z_2)-\alpha_3(z_3+z_4)-\alpha_4(x_3+x_4)}(Ty^{-\sum x_j})^{-(\alpha_1+\alpha_4)v_1}(Ty^{-\sum z_j})^{-(\alpha_2+\alpha_3)v_2}\nonumber\\
&\qquad\ \ +y^{-\alpha_1(z_1+z_2)-\alpha_2(x_1+x_2)-\alpha_3(z_3+z_4)-\alpha_4(x_3+x_4)}(Ty^{-\sum z_j})^{-(\alpha_1+\alpha_3)v_2}(Ty^{-\sum x_j})^{-(\alpha_2+\alpha_4)v_1}\bigg)\nonumber\\
&\ =\frac{\big(1-\vartheta\sum x_j\big)\big(1-\vartheta\sum z_j\big)}{2}y^{-\alpha_1(x_1+x_2)-\alpha_2(z_1+z_2)-\alpha_3(x_3+x_4)-\alpha_4(z_3+z_4)}(Ty^{-\sum x_j})^{-(\alpha_1+\alpha_3)v_1}\nonumber\\
&\qquad\times(Ty^{-\sum z_j})^{-(\alpha_2+\alpha_4)v_2}\bigg(1-\Big(T^{v_1-v_2}y^{x_1+x_2-z_1-z_2-v_1\sum x_j+v_2\sum z_j}\Big)^{\alpha_1-\alpha_2}\bigg)\\
&\qquad\times\bigg(1-\Big(T^{v_1-v_2}y^{x_3+x_4-z_3-z_4-v_1\sum x_j+v_2\sum z_j}\Big)^{\alpha_3-\alpha_4}\bigg).\nonumber
\end{align*}
Using \eqref{int} again in \eqref{finalformulaI} and simplifying we obtain the theorem.

\section{Large gaps between zeros of $\zeta(s)$ and proof of Theorem \ref{gapthm}} \label{sectionW}

With our limited knowledge, there is only one known method to detect large gaps between consecutive zeros of the Riemann zeta-function unconditionally, and this was discovered by the second named author. A key ingredient is the use of some Wirtinger type inequalities. We shall start with the basic idea and then discuss the two known approaches.

\subsection{The Wirtinger inequality and large gaps between zeros}\label{basicidea}

Let $a,b\in\mathbb{R}$ and $f\in C^1$ with $f(a)=f(b)=0$. Then the Wirtinger inequality states that
\begin{equation}\label{firstWineq}
\int_{a}^{b}|f(t)|^2dt\leq \Big(\frac{b-a}{\pi}\Big)^2\int_{a}^{b}|f'(t)|^2dt .
\end{equation}

Let $Z(t)$ be the Hardy $Z$-function. In particular, $|Z(t_n)|=|\zeta(1/2+it_n)|=0$ for all $n\in\mathbb{N}$. Consider the hypothesis  $
\Lambda\leq \kappa$.
Then for $t_n\in[T,2T]$ we have
\begin{equation*}
t_{n+1}-t_n\leq \big(1+o(1)\big)\frac{2\pi \kappa}{\log T},
\end{equation*}
as $T\rightarrow\infty$. In view of \eqref{firstWineq} we then get
\begin{align*}
\int_{t_n}^{t_{n+1}}|Z(t)|^2dt&\leq\big(1+o(1)\big)\frac{4\kappa^2}{(\log T)^2}\int_{t_n}^{t_{n+1}}|Z'(t)|^2dt.
\end{align*}
Let $t_n,t_m\in[T,2T]$ be the zeros closest to $T$ and $2T$, respectively. Summing up the above inequalities we obtain that
\begin{align*}
\int_{t_n}^{t_{m}}|Z(t)|^2dt&\leq\big(1+o(1)\big)\frac{4\kappa^2}{(\log T)^2}\int_{t_n}^{t_{m}}|Z'(t)|^2dt.
\end{align*}
As the integrals above are both $\gg T$, and by our hypothesis we have $t_n-T\ll 1$ and $t_m-2T\ll 1$, it follows that 
\begin{align*}
\int_{T}^{2T}|Z(t)|^2dt&\leq\big(1+o(1)\big)\frac{4\kappa^2}{(\log T)^2}\int_{T}^{2T}|Z'(t)|^2dt.
\end{align*}
Hence if
\[
\int_{T}^{2T}|Z(t)|^2dt>\frac{4\kappa^2}{(\log T)^2}\int_{T}^{2T}|Z'(t)|^2dt,
\]
then we must have $\Lambda>\kappa$.

It is known that 
\[
\int_{T}^{2T}|Z^{(k)}(t)|^2dt\sim \frac{T(\log T)^{2k+1}}{4^k(2k+1)}
\]
as $T\rightarrow\infty$ (see, for instance, \cite[Theorem 3]{H3}). We therefore get $\Lambda\geq\sqrt{3}=1.73\ldots$.

\subsection{Hall's first approach}\label{Hall1st}

The key ingredient in Hall's first attempt is a Wirtinger type inequality due to Beesack \cite{Bee}, which generally states that 
\begin{equation}\label{secWineq}
\int_{a}^{b}|f(t)|^{2k}dt\leq \frac{1}{2k-1}\Big(k\sin\frac{\pi}{2k}\Big)^{2k}\Big(\frac{b-a}{\pi}\Big)^{2k}\int_{a}^{b}|f'(t)|^{2k}dt,
\end{equation}
where $f\in C^1$ with $f(a)=f(b)=0$. The Wirtinger inequality \eqref{firstWineq} is the special case $k=1$ of \eqref{secWineq}. Using \eqref{secWineq} with $k=2$, together with  the idea described above and the fourth moments of the Riemann zeta-function and its derivative, Hall \cite{H3} showed that $\Lambda\geq \sqrt[4]{105/4}=2.26\ldots$. With a more general inequality when $k=2$ (see Theorem \ref{Wi} below), the result was later improved to $\Lambda\geq \sqrt{11/2}=2.34\ldots$ \cite{H2}.

\subsection{Hall's second approach}

Hall's later approach just uses the standard Wirtinger inequality \eqref{firstWineq} rather than the $L^4$ form as described in Section \ref{Hall1st}. The key idea is to apply \eqref{firstWineq} to 
\begin{equation}\label{fformula}
f(t)=Z(t)Z\Big(t+\frac{2\pi\kappa}{\log T}\Big)
\end{equation}
and observe that if $f(t)$ does not vanish in the interior of a closed interval of length $2\kappa\pi/\log T$, then $Z(t)$ does not vanish in the interior of a closed interval of length $4\kappa\pi/\log T$. So, arguing as in Section \ref{basicidea}, if
\[
\int_{T}^{2T}|f(t)|^2dt>\frac{4\kappa^2}{(\log T)^2}\int_{T}^{2T}|f'(t)|^2dt,
\]
then we must have $\Lambda>2\kappa$. The drawback of the employment of the standard Wirtinger inequality is hence compensated by the doubling of the shift, and Hall \cite{H} obtained that $\Lambda>2.63$.

The later improvement of Bui and Milinovich \cite{BM}, and also of Bredberg \cite{B1}, comes from introducing an amplifier to $f$ in \eqref{fformula}. Instead of \eqref{fformula}, Bui and Milinovich \cite{BM} applied the  Wirtinger inequality \eqref{firstWineq} to
\[
\widetilde{f}(t)=Z(t)Z\Big(t+\frac{2\pi\kappa}{\log T}\Big)A(\tfrac12+it),
\]
where $A$ is an amplifier of length $T^\vartheta$ with $\vartheta<1/4$. Their extra ingredient is the mean value of the fourth power of $\zeta(s)$ times the square of an amplifier. Using our Theorem  \ref{mainthm} one can instead consider 
\[
\widetilde{f}(t)=Z(t)Z\Big(t+\frac{2\pi\kappa}{\log T}\Big)A(\tfrac12+it)A\Big(\tfrac12+it+\frac{2\pi i\kappa}{\log T}\Big),
\]
where $A$ is an amplifier of length $T^\vartheta$ with $\vartheta<1/8$. This will also lead to an improvement to Hall's result $\Lambda>2.63$.

\subsection{Hall's Wirtinger type inequality}

In this paper we shall employ a Wirtinger type inequality essentially due to Hall. For the application to large gaps between zeros of the Riemann zeta-function, we shall use the amplified fourth moment of $\zeta(s)$ in Theorem \ref{mainthm} rather than just the fourth moment of $\zeta(s)$ as in \cite{H2}.

\begin{theorem}\label{Wi}
Let $a,b\in\mathbb{R}$ and let $f$ be a differentiable complex-valued function with $f(a)=f(b)=0$. Then for any $v\geq0$ we have
\begin{equation*}
3\lambda_0(v)\int_{a}^{b}|f(t)|^4dt\leq \Big(\frac{b-a}{\pi}\Big)^4\int_{a}^{b}|f'(t)|^4dt +6v\Big(\frac{b-a}{\pi}\Big)^2\int_{a}^{b}|f(t)f'(t)|^2dt,
\end{equation*}
where
\[
\lambda_0(v)=\frac{1+4v+\sqrt{1+8v}}{8}.
\]
\end{theorem}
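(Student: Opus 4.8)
The plan is to prove this as a $2\pi$-periodic Fourier/variational inequality. First I would reduce to the normalized interval: by replacing $f(t)$ with $g(\theta)=f\big(a+\tfrac{(b-a)\theta}{\pi}\big)$ and scaling, the stated inequality becomes
\[
3\lambda_0(v)\int_0^\pi |g|^4\,d\theta \le \int_0^\pi |g'|^4\,d\theta + 6v\int_0^\pi |gg'|^2\,d\theta,
\]
with $g(0)=g(\pi)=0$. One then extends $g$ to $[-\pi,\pi]$ as an odd function (so the boundary conditions are respected and periodicity holds) and works with Fourier sine coefficients $g(\theta)=\sum_{n\ge1} c_n \sin n\theta$. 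The right choice of extremal is $g(\theta)=\sin\theta$ (or linear combinations exploiting the extremizer of Beesack's $L^4$ inequality), and $\lambda_0(v)$ is exactly the constant making the inequality tight there: one checks $\int_0^\pi \sin^4 = \int_0^\pi \cos^4 = \tfrac{3\pi}{8}$ and $\int_0^\pi \sin^2\cos^2 = \tfrac{\pi}{8}$, so equality forces $3\lambda_0(v)\cdot\tfrac38 = \tfrac38 + 6v\cdot\tfrac18$, i.e. $\lambda_0(v) = \tfrac{1+2v}{3}\cdot\ldots$ — and the precise form $\lambda_0(v)=\frac{1+4v+\sqrt{1+8v}}{8}$ comes from optimizing over a one-parameter family of competitors, not just $\sin\theta$ alone.

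The core of the argument is a pointwise/quadratic-form manipulation rather than raw Fourier analysis. I would follow Hall's original device in \cite{H2}: write $h = |g|^2$, observe $h(0)=h(\pi)=0$ and $h' = 2\,\mathrm{Re}(\bar g g')$, so $|h'|^2 \le 4|g|^2|g'|^2$, and apply the ordinary Wirtinger inequality \eqref{firstWineq} to $h$ on $[0,\pi]$ to get $\int_0^\pi h^2 \le \int_0^\pi (h')^2 \le 4\int_0^\pi |g|^2|g'|^2$. Separately one needs a bound controlling $\int_0^\pi|g|^2|g'|^2$ and $\int_0^\pi|g'|^4$ against $\int_0^\pi|g|^4$; here the key algebraic identity (for the extremal real case) relates $\int (g')^4$, $\int g^2(g')^2$, $\int g^4$ via integration by parts using $g g'' \sim -g^2$-type relations along the extremal. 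The parameter $v$ enters as the Lagrange multiplier weighting the cross term $\int|gg'|^2$, and $\lambda_0(v)$ emerges as the smallest eigenvalue of the resulting $2\times2$ quadratic form; the square root $\sqrt{1+8v}$ is the discriminant of that eigenvalue problem. The inequality $|f'|^4 + 6v|f|^2|f'|^2 \ge 3\lambda_0(v)|f|^4$ then follows by combining these with the weights chosen so the cross terms cancel.

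The main obstacle I anticipate is twofold: first, handling the complex-valued case — one cannot simply assume $g$ is real, and the inequality $|h'|^2 \le 4|g|^2|g'|^2$ (rather than equality) introduces slack that must be shown harmless, which typically means one reduces to the real case by noting that the worst case occurs when $\bar g g'$ is real, or by an argument that the Fourier-side quadratic form only improves for complex $g$; second, pinning down the exact constant $\lambda_0(v)$ requires carefully optimizing over the admissible test functions and verifying that $\sin\theta$ together with its companion (arising from Beesack's extremal, which is an incomplete elliptic-type function but reduces to trigonometric functions here) is genuinely optimal. I expect the calculation of $\lambda_0(v)$ to hinge on solving the Euler–Lagrange equation $g'' \cdot(\text{something involving } g'^2 \text{ and } v) = -\mu\, g^3$-type ODE, recognizing its solution as a scaled sine, and reading off $\mu$ — and this is where most of the genuine work lies; the reduction and the Wirtinger-on-$|g|^2$ trick are routine once the right quadratic form is identified. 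I would also double-check the case $v=0$ recovers Hall's $\Lambda \ge \sqrt[4]{105/4}$ after substituting the known fourth moments, since $\lambda_0(0) = \tfrac{2}{8} = \tfrac14$ gives the factor $3/4$ that matches $\sqrt[4]{105/4}$.
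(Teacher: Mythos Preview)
Your proposal attempts to re-derive the inequality from scratch via Fourier analysis and Euler--Lagrange equations, but this is not what the paper does, and your plan remains incomplete at precisely the two points you yourself flag as obstacles (the exact constant $\lambda_0(v)$ and the complex-valued case). The paper's proof is essentially one line: the real-valued case is already Theorem~2 of Hall \cite{H2}, cited as a black box, so there is nothing to prove about $\lambda_0(v)$, extremals, or the variational problem. The only new content is the extension to complex $f$, and this is handled not by Fourier-side arguments but by applying Hall's real-valued inequality directly to $g(t)=|f(t)|$ and then observing that $|g'(t)|\le |f'(t)|$ pointwise (a two-line computation from $2|f|g'=2uu'+2vv'$ and Cauchy--Schwarz/AM--GM). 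Since the right-hand side of the inequality is monotone in $|g'|$, replacing $g'$ by $f'$ only strengthens it.

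Your idea of setting $h=|g|^2$ and applying the $L^2$ Wirtinger inequality to $h$ is in the right spirit --- it is one of the ingredients in Hall's original proof in \cite{H2} --- but it only yields $\int|g|^4\le 4\int|gg'|^2$, which is not enough on its own; you still need the second half of Hall's argument to bring in $\int|g'|^4$ and produce $\lambda_0(v)$. Rather than reconstructing that, the paper simply cites it. For the complex reduction specifically, note that the paper's trick of working with $|f|$ rather than $|f|^2$ is more efficient: it lets you invoke the full $L^4$ inequality immediately, whereas your $h=|g|^2$ route would require bootstrapping from the $L^2$ inequality and re-doing the optimization. (Also, a minor slip: you write that $\sin\theta$ should be extremal, but your own check gives $\lambda_0(v)\le\tfrac{1+2v}{3}$, which is strictly larger than $\tfrac{1+4v+\sqrt{1+8v}}{8}$ for $v>0$, confirming that $\sin\theta$ is not the extremizer --- the actual extremal in \cite{H2} is a Jacobi-type function.)
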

\begin{proof}
This theorem was proved by Hall \cite[Theorem 2]{H2} for continuously differentiable real-valued functions. For completeness we will show that it also holds for complex-valued functions. 

For $t\in\mathbb{R}$, let $f(t)=u(t)+iv(t)$. Then $g(t)=|f(t)|$ is continuously differentiable except at the real zeros of $f$, which are isolated. Hence by applying Theorem 2 in \cite{H2} to $g$, we obtain that
\begin{equation}\label{eq3}
3\lambda_0(v)\int_{a}^{b}|f(t)|^4dt\leq \Big(\frac{b-a}{\pi}\Big)^4\int_{a}^{b}g'(t)^4dt +6v\Big(\frac{b-a}{\pi}\Big)^2\int_{a}^{b}|f(t)|^2g'(t)^2dt.
\end{equation}

On one hand we have
\[
g(t)^2=u(t)^2+v(t)^2.
\]
Differentiating gives
\[
2|f(t)|g'(t)=2u(t)u'(t)+2v(t)v'(t),
\]
and hence
\begin{equation}\label{eq1}
|f(t)|^2g'(t)^2=u(t)^2u'(t)^2+v(t)^2v'(t)^2+2u(t)v(t)u'(t)v'(t).
\end{equation}
On the other hand, $f'(t)=u'(t)+iv'(t)$, and we have
\begin{align}\label{eq2}
|f(t)|^2|f'(t)|^2&=(u(t)^2+v(t)^2)(u'(t)^2+v'(t)^2)\nonumber\\
&=u(t)^2u'(t)^2+v(t)^2v'(t)^2+u(t)^2v'(t)^2+u'(t)^2v(t)^2.
\end{align}
Comparing \eqref{eq1} and \eqref{eq2} we get
\[
|f(t)|^2g'(t)^2\leq |f(t)|^2|f'(t)|^2.
\]
Thus, $|g'(t)|\leq|f'(t)|$, and in view of \eqref{eq3} we obtain the theorem.
\end{proof}

\subsection{Reduction to mean value theorems}

We shall consider
\[
f(t):=f(t,u,A)=e^{iu(\log T)t}\zeta(\tfrac{1}{2}+it)A(\tfrac{1}{2}+it),
\]
where $u\in\mathbb{R}$ to be chosen later. Denote all the zeros of the function $f$ in the interval $[T,2T]$ by $\widetilde{t}_1\leq \widetilde{t}_2\leq\ldots\leq \widetilde{t}_N$ . Suppose, for the sake of contradiction, that 
\[
\Lambda\leq\kappa.
\]
Then we have
\[
\widetilde{t}_{n+1}-\widetilde{t}_n\leq \big(1+o(1)\big)\frac{2\pi \kappa}{\log T},
\]
as $T\rightarrow\infty$ for $1\leq n\leq N-1$. It follows from Theorem \ref{Wi} that
\begin{align*}
3\lambda_0(v)\int_{\widetilde{t}_n}^{\widetilde{t}_{n+1}}|f(t)|^4dt&\leq\big(1+o(1)\big)\frac{16\kappa^4}{(\log T)^4}\int_{\widetilde{t}_n}^{\widetilde{t}_{n+1}}|f'(t)|^4dt\\
&\qquad\qquad+\big(6v+o(1)\big)\frac{4\kappa^2}{(\log T)^2}\int_{\widetilde{t}_n}^{\widetilde{t}_{n+1}}|f(t)f'(t)|^2dt
\end{align*}
for $1\leq n\leq N-1$. Summing up as in Section \ref{basicidea} we have
\begin{align*}
3\lambda_0(v)\int_{T}^{2T}|f(t)|^4dt&\leq\big(1+o(1)\big)\frac{16\kappa^4}{(\log T)^4}\int_{T}^{2T}|f'(t)|^4dt\\
&\qquad\qquad+\big(6v+o(1)\big)\frac{4\kappa^2}{(\log T)^2}\int_{T}^{2T}|f(t)f'(t)|^2dt.
\end{align*}
Hence provided that 
\begin{align}\label{keyineq}
3\lambda_0(v)\int_{T}^{2T}|f(t)|^4dt>\frac{16\kappa^4}{(\log T)^4}\int_{T}^{2T}|f'(t)|^4dt+\frac{24v\kappa^2}{(\log T)^2}\int_{T}^{2T}|f(t)f'(t)|^2dt,
\end{align}
then we obtain that $\Lambda>\kappa$.

\subsection{Proof of Theorem \ref{gapthm}}

We have
\[
|f(t)|=\big|\zeta(\tfrac{1}{2}+it)A(\tfrac{1}{2}+it)\big|,
\]
and hence
\[
\int_{T}^{2T}|f(t)|^4dt=\frac{a_{2r+2}c(\underline{0},\underline{0})}{2((r-1)!)^8((r^2-1)!)^4}T(\log y)^{4r^2+8r}(\log T)^{4}+O\big(T(\log T)^{4(r+1)^2-1}\big),
\]
by Theorem \ref{mainthm}.

With $f'(t)$ we have
\begin{align*}
\frac{f'(t)}{ie^{iu(\log T)t}}&=u(\log T)\zeta\big(\tfrac{1}{2}+it\big)A\big(\tfrac{1}{2}+it\big) +\Big(\frac{d}{d\alpha}+\frac{d}{d\beta}\Big)\zeta\big(\tfrac{1}{2}+\alpha+it\big)A\big(\tfrac{1}{2}+\beta+it\big)\bigg|_{\alpha=\beta=0}\\
&=(\log T)Q\bigg(\frac{1}{\log T}\Big(\frac{d}{d\alpha}+\frac{d}{d\beta}\Big)\bigg)\zeta\big(\tfrac{1}{2}+\alpha+it\big)A\big(\tfrac{1}{2}+\beta+it\big)\bigg|_{\alpha=\beta=0},
\end{align*}
where
\begin{equation}\label{Qformula}
Q(x)=u+x.
\end{equation}
Therefore,
\begin{align}\label{abc}
&\int_{-\infty}^{\infty}|f'(t)|^4w\Big(\frac tT\Big)dt=(\log T)^4Q\bigg(\frac{1}{\log T}\Big(\frac{d}{d\alpha_1}+\frac{d}{d\beta_1}\Big)\bigg)Q\bigg(\frac{1}{\log T}\Big(\frac{d}{d\alpha_2}+\frac{d}{d\beta_2}\Big)\bigg)\nonumber\\
&\qquad\times Q\bigg(\frac{1}{\log T}\Big(\frac{d}{d\alpha_3}+\frac{d}{d\beta_3}\Big)\bigg)Q\bigg(\frac{1}{\log T}\Big(\frac{d}{d\alpha_4}+\frac{d}{d\beta_4}\Big)\bigg)I(\underline{\alpha},\underline{\beta})\bigg|_{\substack{\underline{\alpha}=\underline{\beta}=\underline{0}}}.
\end{align}

Since $I(\underline{\alpha},\underline{\beta})$ is holomorphic with respect to $\alpha_j, \beta_j$ small, the derivatives in \eqref{abc} can be obtained as integrals of radii $\asymp (\log T)^{-1}$ around the points $\alpha_j=\beta_j=0$, $1\leq j\leq 4$, using Cauchy's integral formula.  Since the error term holds uniformly on these contours, the error term that holds for $I(\underline{\alpha},\underline{\beta})$ also holds for its derivatives. Thus
\begin{align*}
\frac{1}{(\log T)^4}\int_{T}^{2T}|f'(t)|^4dt&=\frac{a_{2r+2}c_1(u)}{2((r-1)!)^8((r^2-1)!)^4}T(\log y)^{4r^2+8r}(\log T)^{4}\\
&\qquad\qquad+O\big(T(\log T)^{4(r+1)^2-1}\big),
\end{align*}
where
\begin{align*}
c_1(u)&=Q\bigg(\frac{1}{\log T}\Big(\frac{d}{d\alpha_1}+\frac{d}{d\beta_1}\Big)\bigg)Q\bigg(\frac{1}{\log T}\Big(\frac{d}{d\alpha_2}+\frac{d}{d\beta_2}\Big)\bigg)\nonumber\\
&\qquad\qquad\times Q\bigg(\frac{1}{\log T}\Big(\frac{d}{d\alpha_3}+\frac{d}{d\beta_3}\Big)\bigg)Q\bigg(\frac{1}{\log T}\Big(\frac{d}{d\alpha_4}+\frac{d}{d\beta_4}\Big)\bigg)c(\underline{\alpha},\underline{\beta})\bigg|_{\substack{\underline{\alpha}=\underline{\beta}=\underline{0}}}.
\end{align*}

Note that
\begin{align*}
Q\bigg(\frac{1}{\log T}\Big(\frac{d}{d\alpha}+\frac{d}{d\beta}\Big)\bigg)X^{\alpha}Y^\beta&=Q\Big(\frac{\log X+\log Y}{\log T}\Big)X^{\alpha}Y^\beta\\
&=\Big(u+\frac{\log X+\log Y}{\log T}\Big)X^{\alpha}Y^\beta,
\end{align*}
by \eqref{Qformula}. So it follows from the expression of $c(\underline{\alpha},\underline{\beta})$ in Theorem \ref{mainthm} that
\begin{align*}
	&c_1(u)= \idotsint\limits_{\substack{0\leq v_j,t_j,x_j,z_j\leq 1\\t_1+t_2+x_3+z_3\leq1\\t_3+t_4+x_4+z_4\leq1\\t_1+t_3+x_1+z_1\leq 1\\t_2+t_4+x_2+z_2\leq 1 }}\big(1-\vartheta\sum x_j\big)\big(1-\vartheta\sum z_j\big)\\
	&\quad\times\Big(v_1-v_2+\vartheta\big(x_1+x_2-z_1-z_2-v_1\sum x_j+v_2\sum z_j\big)\Big)\\
	&\quad\times\Big(v_1-v_2+\vartheta\big(x_3+x_4-z_3-z_4-v_1\sum x_j+v_2\sum z_j\big)\Big)\\
	&\quad\times \Big(u-\vartheta(t_1+t_2+x_1+x_2+x_3+z_3)-v_1(1-\vartheta\sum x_j)\\
	&\qquad\qquad\qquad+v_3\big(v_1-v_2+\vartheta(x_1+x_2-z_1-z_2-v_1\sum x_j+v_2\sum z_j)\big)\Big)\\
	 &\quad\times \Big(u-\vartheta(t_3+t_4+x_4+z_1+z_2+z_4)-v_2(1-\vartheta\sum z_j)\\
	&\qquad\qquad\qquad-v_3\big(v_1-v_2+\vartheta(x_1+x_2-z_1-z_2-v_1\sum x_j+v_2\sum z_j)\big) \Big)\\
	&\quad\times \Big(u-\vartheta(t_1+t_3+x_1+x_3+x_4+z_1)-v_1(1-\vartheta\sum x_j)\\
		&\qquad\qquad\qquad+v_4\big(v_1-v_2+\vartheta\big(x_3+x_4-z_3-z_4-v_1\sum x_j+v_2\sum z_j)\big)\Big)\\
    &\quad\times \Big(u-\vartheta(t_2+t_4+x_2+z_2+z_3+z_4)-v_2(1-\vartheta\sum z_j)\\
    &\qquad\qquad\qquad-v_4\big(v_1-v_2+\vartheta\big(x_3+x_4-z_3-z_4-v_1\sum x_j+v_2\sum z_j)\big)\Big)\\
	&\quad\times (x_1x_2x_3x_4z_1z_2z_3z_4)^{r-1}(t_1t_2t_3t_4)^{r^2-1}P(1-t_1-t_2-x_3-z_3)\\
	&\quad\times P(1-t_3-t_4-x_4-z_4)P(1-t_1-t_3-x_1-z_1)P(1-t_2-t_4-x_2-z_2)\\
	&\quad\times dx_1dx_2dx_3dx_4dz_1dz_2dz_3dz_4dt_1dt_2dt_3dt_4dv_1dv_2dv_3dv_4,
\end{align*}

Similarly,
\begin{align*}
\frac{1}{(\log T)^2}\int_{T}^{2T}|f(t)f'(t)|^2dt&=\frac{a_{2r+2}c_2(u)}{2((r-1)!)^8((r^2-1)!)^4}T(\log y)^{4r^2+8r}(\log T)^{4}\\
&\qquad\qquad+O\big(T(\log T)^{4(r+1)^2-1}\big),
\end{align*}
where
\begin{align*}
c_2(u)&=Q\bigg(\frac{1}{\log T}\Big(\frac{d}{d\alpha_1}+\frac{d}{d\beta_1}\Big)\bigg)Q\bigg(\frac{1}{\log T}\Big(\frac{d}{d\alpha_3}+\frac{d}{d\beta_3}\Big)\bigg)c(\underline{\alpha},\underline{\beta})\bigg|_{\substack{\underline{\alpha}=\underline{\beta}=\underline{0}}}.
\end{align*}
Hence
\begin{align*}
	&c_2(u)= \idotsint\limits_{\substack{0\leq v_j,t_j,x_j,z_j\leq 1\\t_1+t_2+x_3+z_3\leq1\\t_3+t_4+x_4+z_4\leq1\\t_1+t_3+x_1+z_1\leq 1\\t_2+t_4+x_2+z_2\leq 1 }}\big(1-\vartheta\sum x_j\big)\big(1-\vartheta\sum z_j\big)\\
	&\quad\times\Big(v_1-v_2+\vartheta\big(x_1+x_2-z_1-z_2-v_1\sum x_j+v_2\sum z_j\big)\Big)\\
	&\quad\times\Big(v_1-v_2+\vartheta\big(x_3+x_4-z_3-z_4-v_1\sum x_j+v_2\sum z_j\big)\Big)\\
	&\quad\times \Big(u-\vartheta(t_1+t_2+x_1+x_2+x_3+z_3)-v_1(1-\vartheta\sum x_j)\\
	&\qquad\qquad\qquad+v_3\big(v_1-v_2+\vartheta(x_1+x_2-z_1-z_2-v_1\sum x_j+v_2\sum z_j)\big)\Big)\\
	&\quad\times \Big(u-\vartheta(t_1+t_3+x_1+x_3+x_4+z_1)-v_1(1-\vartheta\sum x_j)\\
		&\qquad\qquad\qquad+v_4\big(v_1-v_2+\vartheta\big(x_3+x_4-z_3-z_4-v_1\sum x_j+v_2\sum z_j)\big)\Big)\\
	&\quad\times (x_1x_2x_3x_4z_1z_2z_3z_4)^{r-1}(t_1t_2t_3t_4)^{r^2-1}P(1-t_1-t_2-x_3-z_3)\\
	&\quad\times P(1-t_3-t_4-x_4-z_4)P(1-t_1-t_3-x_1-z_1)P(1-t_2-t_4-x_2-z_2)\\
	&\quad\times dx_1dx_2dx_3dx_4dz_1dz_2dz_3dz_4dt_1dt_2dt_3dt_4dv_1dv_2dv_3dv_4,
\end{align*}

In view of \eqref{keyineq} we obtain that $\Lambda>\kappa$ if
\begin{align}\label{keyineq2}
3\lambda_0(v)c(\underline{0},\underline{0})>16\kappa^4c_1(u)+24v\kappa^2c_2(u).
\end{align}
Notice that \eqref{keyineq2} is equivalent to
\[
\kappa^2<\frac{\sqrt{144v^2c_2(u)^2+48\lambda_0(v)c(\underline{0},\underline{0})c_1(u)}-12vc_2(u)}{16c_1(u)},
\]
and hence
\begin{align}\label{keyineq3}
\Lambda^2\geq\frac{\sqrt{144v^2c_2(u)^2+48\lambda_0(v)c(\underline{0},\underline{0})c_1(u)}-12vc_2(u)}{16c_1(u)}.
\end{align}

In the case $\vartheta=0$, $r=1$, $P(x)=1$ and $u=1/2$, the inequality \eqref{keyineq3} reduces to
\[
\Lambda^2\geq \sqrt{49v^2+105\lambda_0(v)}-7v.
\] 
The right hand side above attains its maximum when $v=22/49$, and we recover Hall's result $\Lambda\geq\sqrt{11/2}$ in \cite[Theorem 1]{H2}.

With an amplifier of length $T^\vartheta$, where  $\vartheta=0.1249$, the choice $r=1$, $P(x)=1$, $u=0.6$ and $v=0.4$ gives $\Lambda>2.64$ as claimed.\\

\noindent{\sc Acknowledgments}. MSJ was supported by the Dame Kathleen Ollerenshaw (DKO) Research Internship when the work commenced.


\begin{thebibliography}{9}
\bibitem{Bee}\label{Bee}
P. R. Beesack, {\it Hardy's inequality and its extensions}, Pacific J. Math. \textbf{11} (1961), 39--61.
\bibitem{BAB}\label{BAB}
G. Ben Arous, P. Bourgade, {\it Extreme gaps between eigenvalues of random matrices}, Ann. Probab. \textbf{41} (2013), 2013), 2648--2681.
\bibitem{BBLR}\label{BBLR}
S. Bettin, H. M. Bui, X. Li, M. Radziwi\l\l, {\it A quadratic divisor problem and moments of the Riemann zeta-function}, J. Eur. Math. Soc. (JEMS) \textbf{22} (2020), 3953--3980.
\bibitem{BCR}\label{BCR}
S. Bettin, V. Chandee, M. Radziwi\l\l\ , {\it The mean square of the product of $\zeta(s)$ with Dirichlet
 polynomials}, J. Reine Angew. Math \textbf{729} (2017), 51--79.
\bibitem{B1}\label{B1}
J. Bredberg, \textit{On large gaps between consecutive zeros, on the critical line, of some zeta-functions}, DPhil. thesis, University of Oxford (2011), see also arXiv:1101.3197.
\bibitem{B}\label{B}
 H. M. Bui, {\it Critical zeros of the Riemann zeta-function}, arXiv:1410.2433.
\bibitem{BCY}\label{BCY}
H. M. Bui, J. B. Conrey, M. P. Young, \textit{More than $41\%$ of the zeros of the zeta function are on the critical line}, Acta Arith. \textbf{150} (2011), 35--64.
\bibitem{BH}\label{BH}
H. M. Bui, R. R. Hall, {\it On the derivatives of Hardy's function $Z(t)$}, Bull. London Math. Soc. \textbf{55} (2023), 2304--2323.
\bibitem{BM}\label{BM}
H. M. Bui, M. B. Milinovich, \textit{Gaps between zeros of the Riemann zeta-function}, Q. J. Math. \textbf{69} (2018), 403--423.
\bibitem{H3}\label{H3}
R. R. Hall, \textit{The behaviour of the Riemann zeta-function on the critical line}, Mathematika \textbf{46} (1999), 281--313.
\bibitem{H2}\label{H2}
R. R. Hall, \textit{A Wirtinger type inequality and the spacing of the zeros of the Riemann zeta-function}, J. Number Theory \textbf{93} (2002), 235--245.
\bibitem{H}\label{H}
R. R. Hall, \textit{A new unconditional result about large spaces between zeta zeros}, Mathematika \textbf{52} (2005), 101--113.
\bibitem{HL}\label{HL}
 G. H. Hardy, J. E. Littlewood, {\it Contributions to the theory of the Riemann zeta-function and the theory of the distribution of primes}, Acta
 Math. \textbf{41} (1918), 119--196.
\bibitem{HRS}\label{HRS}
W. Heap, M. Radziwiłł, K. Soundararajan, {\it Sharp upper bounds for fractional moments of the Riemann zeta function},  Q. J. Math., \textbf{70} (2019), 1387--1396.
\bibitem{H-B}\label{H-B}
D. R. Heath-Brown, {\it Fractional moments of the Riemann zeta function}, J. London Math. Soc. \textbf{24} (1981), 65--78.
\bibitem{HY}\label{HY}
C. P. Hughes, M. P. Young, {\it The twisted fourth moment of the Riemann zeta function},  J. Reine Angew. Math. \textbf{641} (2010), 203--236.
\bibitem{I}\label{I}
A. E. Ingham, {\it Mean-value theorems in the theory of the Riemann zeta
function}, Proc. London Math. Soc. \textbf{27} (1926), 273--300.
\bibitem{M}\label{M}
H. L. Montgomery, \textit{The pair correlation of zeros of the zeta function}, Analytic number theory (Proc. Sympos. Pure Math., Vol. XXIV, St. Louis Univ., 1972), 181--193. 
\bibitem{P1}\label{P1}
T. Page, \textit{Unconditional lower bounds for the sixth and eighth moments of the Riemann zeta function}, J. Number Theory. \textbf{280} (2026), 318--369.
\bibitem{S}\label{S}
A. Selberg, \textit{The zeta-function and the Riemann hypothesis}, C. R. Dixi\`eme Congr\`es Math. Scandinaves (1946), 187--200.
\end{thebibliography}
\end{document}